\newcommand{\NN}{\mathbb{N}}
\newcommand{\RR}{\mathbb{R}}
\newcommand{\CC}{\mathbb{C}}
\newcommand{\ZZ}{\mathbb{Z}}
\newcommand{\GG}{\mathbb{G}}
\newcommand{\norm}[1]{\lVert#1\rVert}
\newcommand{\abs}[1]{\lvert#1\rvert}
\newtheorem{theorem}{Theorem}[section]
\newtheorem{corollary}[theorem]{Corollary}
\newtheorem{lemma}[theorem]{Lemma}
\newtheorem{proposition}[theorem]{Proposition}
\newcommand{\comment}[1]{}
\theoremstyle{definition}
\newtheorem{definition}[theorem]{Definition}
\numberwithin{equation}{section}
\renewcommand*\env@matrix[1][*\c@MaxMatrixCols c]{%
  \hskip -\arraycolsep
  \let\@ifnextchar\new@ifnextchar
  \array{#1}}
\begin{document}
\title[Twisted geodesic flow]
{The twisted cohomological equation over the partially hyperbolic flow}

\author{ Zhenqi Jenny Wang$^1$ }
\thanks{ $^1$ Based on research supported by NSF grant   DMS-1700837}
\address{Department of Mathematics\\
        Michigan State University\\
        East Lansing, MI 48824,USA}
\email{wangzq@math.msu.edu}

\email{}

\keywords{Higher rank abelian group actions, cocycle rigidity,
induced unitary representation, Mackey theory}

\begin{abstract}
Let $\mathbb{G}$ be a higher-rank connected semisimple Lie group
with finite center and without compact factors. In any unitary representation $(\pi, \mathcal{H})$ of $\mathbb{G}$ without non-trivial $\mathbb{G}$-fixed vectors,
we study the twisted cohomological equation $(X+m)f=g$, where $m\in\RR$ and $X$ is in a $\RR$-split Cartan subalgebra of $\text{Lie}(\mathbb{G})$. We characterize the obstructions to solving the
cohomological equation, construct smooth solutions of the cohomological
equation and obtain tame Sobolev estimates for $f$.

We also study common solution to (the infinitesimal version of) the twisted cocycle equation $(X+m)g_1=(\mathfrak{v}+m_1)g_2$, where $\mathfrak{v}$ is nilpotent or in a $\RR$-split Cartan subalgebra, $m,m_1\in\RR$.

This is the first paper studying general twisted equations. Compared to former papers, a new technique in representation theory is developed by Mackey theory and Mellin transform.
\end{abstract}

\maketitle
\section{Introduction}
\subsection{Various algebraic actions} \label{sec:3}We define $\ZZ^k\times \RR^\ell$, $k+\ell\geq 1$ algebraic actions as follows. Let $H$ be a
connected Lie group, $A\subseteq H$ a closed abelian subgroup which
is isomorphic to $\ZZ^k\times \RR^\ell$, $L$ a compact subgroup of
the centralizer $Z(A)$ of $A$, and $\Gamma$ a (cocompact) torsion free lattice in
$H$. Then $A$ acts by left translation on the compact space
$\mathcal{M}=L\backslash H/\Gamma$. Denote this action by $\alpha_A$. The three specific
types of examples discussed below correspond to:
\begin{itemize}
  \item  for the symmetric space examples take $H$ a semisimple Lie group of the non-compact
type.

\smallskip
  \item  for the twisted symmetric space examples take $H=G\ltimes _\rho\RR^m$ or $H=G\ltimes_\rho N$, a semidirect
product of a reductive Lie group $G$ with semisimple factor of the non-compact
type with $\RR^m$ or a simply connected nilpotent group $N$.

\smallskip
  \item for the parabolic action examples, take $H$ a semisimple Lie group of the non-compact
type and $A$ a subgroup of a maximal abelian unipotent subgroup in $H$.
\end{itemize}
\subsection{Motivation and results} In the past two decades various rigidity phenomena for higher rank algebraic actions have been well understood. Significant progresses have been made in the case of cocycle rigidity for both higher-rank
 partially hyperbolic actions (see \cite{Damjanovic1}, \cite{Kononenko},\cite{Spatzier1}, \cite{Zhenqi0}, \cite{W2}) and parabolic actions (see \cite{Forni}, \cite{Ramirez}, \cite{Mieczkowski1}, \cite{W1}).
This is in contrast to the rank-one actions, where Livsic showed that
there is an infinite-dimensional space of obstructions to solving the cohomological
equation for a hyperbolic action by $\RR$ or $\ZZ$. It is natural to extend the study to twisted  cohomological equation and twisted cocycle rigidity.

In fact, twisted  cohomological equation is closely related to prove local differentiable rigidity for algebraic actions by KAM scheme. The KAM method was firstly used  in \cite{Damjanovic4} to obtain local rigidity for genuinely higher-rank partially hyperbolic actions on torus. An adapted version of the scheme was applies to prove weak local rigidity for certain
parabolic algebraic actions on homogeneous space of $SL(2,\RR)\times SL(2,\RR)$ in \cite{DK-parabolic}. A key step in the scheme is to solve the
linearized equation:
\begin{align*}
 \text{Ad}(\alpha)\Omega-\Omega\circ\alpha=0,
\end{align*}
where $\alpha$ is an $A$-algebraic action and $\Omega$ takes values in the tangent space of the homogeneous space. The equation splits into the twisted cohomological equations of the form
\begin{align}\label{for:7}
 \lambda\Omega_i-\Omega_i\circ\alpha=0
\end{align}
on the $\lambda$-eigenvector space of $\text{Ad}(\alpha)$.

To carry out the scheme, we need to make a detailed study of the twisted equation and obtain twisted cocycle rigidity. More precisely, the
construction of the solution to the twisted coboundary equation, classification of
the obstruction and obtaining tame estimates of the solution are needed.

In this paper, we give a complete solution to the twisted cohomological equation over the flow of $\RR$-semisimple element of the Lie algebra. We also obtain twisted cocycle rigidity of the flows of two commuting elements
of the Lie algebra: one nilpotent, and the other $\RR$-semisimple or both are $\RR$-semisimple.
All the results in the present paper are essential for successful application of the KAM-scheme to various algebraic action models in the future work, see \cite{Zhenqi4}, \cite{Zhenqi5}, \cite{Zhenqi6}.

\subsection{History and method} In \cite{Damjanovic4} Damjanovic and Katok use Fourier analysis to prove twisted cocycle rigidity for higher-rank partially hyperbolic actions on torus. In \cite{Spatzier1}
Katok and Spatzier use harmonic analysis method to obtain cocycle rigidity for irreducible Anosov homogeneous actions, which was further developed by the author to extend the results to partially hyperbolic
actions in \cite{W2}. In \cite{Damjanovic1}, \cite{Kononenko} and \cite{Zhenqi0}, the geometric method has been extensively used to study various partially hyperbolic examples.

The natural difficulty in extending the cocycle
rigidity results to twisted cocycles comes from the construction of distributional solutions.
The success of Fourier analysis method is due to  the fact that matrix coefficients for ergodic partially hyperbolic automorphism on torus have super-polynomial decay. By contrast, in the semisimple and other cases at hand  there is a particular speed of exponential decay of matrix coefficients, however smooth the functions  are, and it is not sufficient to construct distribution solutions for the twisted coboundary equations once the absolute value of the $\lambda$ in \eqref{for:7} is not sufficiently close to $1$. This results in the failure of the harmonic analysis method and geometric method to treat the general twisted case.

For parabolic actions, the question is substantially more difficult. Compared to hyperbolic actions, cocycle rigidity results for parabolic actions have been established for very few models. So far the effective tool is representation theory. Flaminio and Forni used representation theory of $SL(2,\RR)$ in \cite{Forni} to study the cohomological equation over the horocycle flow. The method was
further applied in \cite{Mieczkowski1} and \cite{Ramirez} to obtain cocycle rigidity for some models of higher rank parabolic actions. In \cite{Mieczkowski} Mieczkowski used this method to study the cohomological equation over the geodesic flow.  Recently, Tanis and the author used representation theory of higher-rank simple Lie groups to
establish cocycle rigidity for new models of higher rank parabolic actions \cite{W1} and \cite{tanis}. In general, the unitary dual of
many higher rank almost-simple algebraic groups is not completely classified,
and even when the classification is known, it is too complicated to apply.

In this paper, we use representation theory to study the twisted cohomological equation as well as the twisted cocycle equations. The basic idea is as follows: we obtain Sobolev estimates of the solution of the equation in sufficiently many subgroups such that their Lie algebras span the whole tangent space. By the uniqueness of the solution and elliptic regularity theorem we obtain global Sobolev estimates of the solution. The idea firstly appeared  in \cite{W1} to study continuous parabolic actions of certain models and further applied in \cite{tanis} to study discrete parabolic actions. In these papers, the results rely  heavily on the results of horocycle flow and discrete parabolic action on $SL(2,\RR)$; and the method can only treat certain types of split simple Lie groups.

In the current paper, we use mellin transform to get the spectral decomposition of the hyperbolic flow. Hence we don't require the candidates of subgroups with semisimple part $SL(2,\RR)$; instead,
we consider subgroups isomorphic to $\RR\ltimes\RR$, $\RR\ltimes\RR^2$ and $\RR\times\RR$. We use Mackey theory to study these representations and carry out
explicit calculations by Mellin transform in each irreducible component that may appear in restricted non-trivial representation of the big group $G$. We don't rely on any previous results of $SL(2,\RR)$
and this method can be applied to all semisimple Lie groups with finite center. It is the first time representation theory other than that of $SL(2,\RR)$ has been applied to hyperbolic actions for symmetric space examples.
The method and results are of independent interest and have wide
applicability.

\section{Statement of results}
 In this paper, $\mathbb{G}$ denotes a connected semisimple Lie group
with finite center and without compact factors and $\mathfrak{G}$ denotes its Lie algebra. Let $\mathcal{C}$ be a $\RR$-split Cartan subalgebra.
Fix an inner product $|\cdot|$ on $\mathfrak{G}$. Let $\mathfrak{G}^1$ be the set of unit vectors in $\mathfrak{G}$. Suppose $X\in \mathcal{C}\cap \mathfrak{G}^1$ and $m>0$. $\mathfrak{G}$ has the eigenspace decomposition for $\text{ad}_X$:
\begin{align}\label{for:30}
\mathfrak{G}=\sum_{\mu\in\Phi}\mathfrak{g}^\mu
\end{align}
where $\Phi$ is the set of eigenvalues and $\mathfrak{g}^\mu$ is the eigenspace for eigenvalue $\mu$.

In what follows, $C$ will denote any constant that depends only
  on the given  group $\GG$ and $X$. $C_{x,y,z,\cdots}$ will denote any constant that in addition to the
above depends also on parameters $x, y, z,\cdots$.

\subsection{Results for the twisted cohomological equation}\label{sec:4}
Let $y_0=\max\{\mu: \mu>0,\,\mu\in \Phi\}$. The next theorem gives a complete study to the twisted cohomological equation.
\begin{theorem}\label{th:6}
Suppose $(\pi,\mathcal{H})$ is a unitary representation of $\mathbb{G}$ such that the restriction of $\pi$ to any simple factor of $\GG$ is isolated from the trivial
representation (in the Fell topology) and $g\in \mathcal{H}^s$, $s\in 2\NN\cup 0$, then for the twisted cohomological equation
\begin{align*}
 (X+m)f=g
\end{align*}
we have:
\begin{enumerate}
\item\label{for:88} if $s=0$, the equation has a unique solution $f\in \mathcal{H}$ with
\begin{align*}
 \norm{f}\leq m^{-1}\norm{g};
\end{align*}

\smallskip

  \item\label{for:4} if $s>\frac{m}{y_0}$ and the equation has a solution $f\in \mathcal{H}$ with $(I-\mathfrak{v}^2)^{\text{\tiny$\frac{m}{2y_0}$}}f\in \mathcal{H}$, where $0\neq\mathfrak{v}\in \mathfrak{g}^{y_0}$, then $f\in \mathcal{H}^{s}$ and satisfies
the Sobolev estimate
\begin{align*}
    \norm{f}_t\leq C_{t,m}\norm{g}_{\max\{t+2,s\}}\qquad 0\leq t\leq s;
\end{align*}

 \item\label{for:84} if $s>\frac{m}{y_0}$ and the equation has a solution $f\in \mathcal{H}^{\text{\tiny$\frac{m}{y_0}$}}$, then $f\in \mathcal{H}^{s}$ and satisfies
the Sobolev estimate
\begin{align*}
    \norm{f}_t\leq C_{t,m}\norm{g}_{\max\{t+2,s\}}\qquad 0\leq t\leq s;
\end{align*}

\smallskip
  \item \label{for:87} if $s>\frac{m}{y_0}$ and $\mathcal{D}(g)=0$ for any $(X-m)$-invariant distribution $\mathcal{D}$, then the equation has a solution $f\in \mathcal{H}^{s}$;

\medskip
\item \label{for:81}if $s\leq\frac{m}{y_0}$, the equation has a solution $f\in \mathcal{H}^{s-2}$ with
the Sobolev estimates
\begin{align*}
    \norm{f}_t\leq C_{t,m}\norm{g}_{t+2}\qquad 0\leq t\leq s-2.
\end{align*}

\end{enumerate}
\end{theorem}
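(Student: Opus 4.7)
Part (1) follows from the spectral theorem applied to the skew-adjoint generator $X$. Concretely, the formula
\begin{equation*}
f \;=\; \int_0^\infty e^{-mt}\,\pi\!\bigl(\exp(-tX)\bigr)\,g\,dt
\end{equation*}
converges absolutely in $\mathcal H$ by unitarity, satisfies $(X+m)f=g$ after an integration by parts, and yields $\|f\|\le m^{-1}\|g\|$. Uniqueness is immediate: a homogeneous solution would be an eigenvector of the skew-adjoint operator $X$ for the non-imaginary eigenvalue $-m$, hence zero.

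For parts (2)--(5), the plan is to reduce to a family of one-dimensional ODEs by restricting $\pi$ to the solvable subgroup $H=\exp(\RR X)\ltimes\exp(\RR\mathfrak v)$, where $\mathfrak v\in\mathfrak g^{y_0}$, and decomposing via Mackey theory. Since $\operatorname{ad}_X$ acts on the dual of $\exp(\RR\mathfrak v)\cong\RR$ by dilation with nonzero weight $y_0$, the Mackey machine produces two nontrivial orbits, each giving rise to a single infinite-dimensional irreducible model realized on $L^2(\RR_{\pm},\,d\lambda/\lambda)$. In this model $\mathfrak v$ acts as multiplication by $2\pi i\lambda$ and $X$ as $-y_0\lambda\partial_\lambda$, so the equation $(X+m)f=g$ becomes a first-order linear ODE whose solution
\begin{equation*}
f(\lambda) \;=\; \lambda^{m/y_0}\int_\lambda^{\pm\infty}\frac{g(\mu)}{y_0\,\mu^{m/y_0+1}}\,d\mu
\end{equation*}
displays the critical exponent $m/y_0$ explicitly. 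Equivalently, the Mellin transform diagonalises $X+m$ as multiplication by $m-y_0\sigma$, exhibiting a single pole at $\sigma=m/y_0$. The isolation-from-trivial hypothesis in each simple factor provides the uniform spectral gap needed to control the direct-integral decomposition quantitatively.

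Within this framework the threshold $s>m/y_0$ in (2)--(4) is exactly the regularity needed to shift the Mellin contour past the pole. For (4), the vanishing of every $(X-m)$-invariant distribution on $g$ translates into $\widetilde g$ vanishing at $\sigma=m/y_0$ in each irreducible, so the formal inverse produces a genuine $\mathcal H^{s}$ solution. For (2), the hypothesis $(I-\mathfrak v^2)^{m/(2y_0)}f\in\mathcal H$ supplies the decay of $\widetilde f$ near the pole that is missing from $g$; for (3), the global regularity $f\in\mathcal H^{m/y_0}$ furnishes the same Mellin decay. In (5), when $s\le m/y_0$ the pole cannot be avoided and the best one obtains is a regularised reciprocal for $(m-y_0\sigma)^{-1}$, producing the two-derivative loss $f\in\mathcal H^{s-2}$. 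Once the directional Sobolev estimates in the $X$- and $\mathfrak v$-directions are established, I would repeat the argument with $\mathfrak v$ replaced by a root vector along every other root of $\mathcal C$, using $\RR\ltimes\RR^{2}$ (Heisenberg-type) or $\RR\times\RR$ subgroups when two roots chain or commute. The resulting directional estimates span $\mathfrak G$, and elliptic regularity for the associated sum-of-squares operator upgrades them to the stated global tame Sobolev bounds.

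The chief obstacle I foresee is the quantitative bookkeeping on the Mellin side: the constants in (2)--(5) must depend tamely on $t$ and $m$ through the distance from the Mellin contour to the pole at $m/y_0$, uniformly across all irreducible components in the direct integral; this uniformity is precisely what the spectral-gap hypothesis is expected to deliver. Stitching the Mellin-pole avoidance together with the obstruction characterisation in (4) will also require showing that every $(X-m)$-invariant distribution globally arises as a Mellin residue in some irreducible component and is consistent across all the solvable subgroups used. The precise loss of two Sobolev degrees in (5) and the $\max\{t+2,s\}$ in (2) and (3) should emerge naturally from the Plancherel/Mellin calculus, but verifying their tame dependence on $t$ uniformly across all irreducibles is where the heaviest estimation work will lie.
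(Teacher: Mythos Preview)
Your outline follows the paper's strategy (Mackey on solvable subgroups, Mellin transform, then elliptic regularity), and part~(1) is fine. But there is a genuine gap in how you propose to handle the \emph{other} positive root directions in parts~(2)--(4).

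You say you would ``repeat the argument with $\mathfrak v$ replaced by a root vector along every other root.'' This does not work directly: if $0<\phi<y_0$ and you run the $\RR\ltimes\RR$ Mellin argument with a $\phi$-eigenvector, the pole sits at $m/\phi>m/y_0$, so the hypothesis $s>m/y_0$ is \emph{not} enough to shift the contour. The paper's key structural observation is that the highest eigenvector $\mathfrak v\in\mathfrak g^{y_0}$ commutes with \emph{every} positive-weight eigenvector $Y_{(\phi,j)}$ (since $y_0+\phi\notin\Phi$). Hence $\{X,\mathfrak v,Y_{(\phi,j)}\}$ generates $\RR\ltimes\RR^2$ with \emph{abelian} normal part---not Heisenberg-type as you wrote. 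In that model both unipotent directions act by multiplication by powers of the same variable $r$, so the hypothesis $(I-\mathfrak v^2)^{m/(2y_0)}f\in\mathcal H$ already forces $(I-Y_{(\phi,j)}^2)^{s/2}f\in\mathcal H$ (the paper's Proposition~5.7). This transfer is what makes the single threshold $s>m/y_0$ suffice for all positive directions; your proposal, as written, would only yield the weaker threshold $s>m/x_0$.

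Two smaller corrections. First, the isolation-from-trivial hypothesis is used only qualitatively, via Howe--Moore, to guarantee that the restriction to each $\RR\ltimes\RR$ or $\RR\ltimes\RR^2$ has no unipotent-invariant vectors; no quantitative spectral gap enters, and the Mellin constants are uniform simply because there are only two nontrivial irreducibles $\beta^{\pm}$ of $\RR\ltimes\RR$. Second, the two-derivative loss in~(5) does not come from a ``regularised reciprocal'' near the pole: when $s\le m/y_0$ the contour never reaches the pole, the directional estimates hold for $t<s$ with constant $(m-t\phi)^{-1}$, and the loss of~$2$ is purely the even-order nature of the elliptic regularity operator $L_{2k}=\sum Y_j^{2k}$.
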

If $m<0$, we can turn to the equation $(-X-m)f=-g$; and the case of $m=0$ was studied in \cite{W2}.

Furthermore, we can obtain uniform Sobolev upper bounds of the solution for the twisted equation for all $\RR$-semisimple vectors in a small neighborhood of $X$. More precisely, we have:

\begin{theorem}\label{th:5}
Suppose $(\pi,\mathcal{H})$ is a unitary representation of $\mathbb{G}$ such that the restriction of $\pi$ to any simple factor of $\GG$ is isolated from the trivial
representation. Also suppose $s\in 2\NN\cup0$, $X\in \mathcal{C}$ and $m_0>0$. Then there exists $\delta(X,s,m_0), s_0(X,m_0)>0$  such that for any $\RR$-semisimple $Y\in \mathfrak{G}$ and any $m_0\in\RR$ with $|X-Y|+|m-m_0|\leq \delta$, for the twisted cohomological equation,
\begin{align*}
 (Y+m)f=g
\end{align*}
where $g\in \mathcal{H}^s$, we have
\begin{enumerate}
\item\label{for:97} if $s=0$, the equation has a unique solution $f\in \mathcal{H}$ with
\begin{align*}
 \norm{f}\leq 2m_0^{-1}\norm{g};
\end{align*}

\smallskip

  \item\label{for:103} if $s>s_0$ and the equation has a solution $f\in \mathcal{H}^{s_0}$, then $f\in \mathcal{H}^{s}$ and satisfies
the Sobolev estimate
\begin{align*}
    \norm{f}_t\leq C_{t,m_0}\norm{g}_{\max\{t+2,s\}}\qquad 0\leq t\leq s;
\end{align*}

\smallskip
  \item \label{for:104} if $s>s_0$ and $\mathcal{D}(g)=0$ for any $(Y-m)$-invariant distribution $\mathcal{D}$, then the equation has a solution $f\in \mathcal{H}^{s}$.

\end{enumerate}
\end{theorem}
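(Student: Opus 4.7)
The plan is to reduce Theorem~\ref{th:5} to Theorem~\ref{th:6} via a conjugation argument, since any $\RR$-semisimple element of $\mathfrak{G}$ near $X$ is $\Ad(\GG)$-conjugate into $\mathcal{C}$ by a group element close to the identity. Part~\eqref{for:97} requires no reduction at all: $\pi(Y)$ is essentially skew-adjoint on $\mathcal{H}$, so $\pi(Y)+mI$ has spectrum in $\{m\}+i\RR$ and is invertible with $\norm{(\pi(Y)+m)^{-1}}\le m^{-1}$. Imposing $\delta<m_0/2$ forces $m\ge m_0/2$, giving $\norm{f}\le 2m_0^{-1}\norm{g}$.

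For parts~\eqref{for:103} and~\eqref{for:104}, the first step is a structural lemma: there exist $\delta_0(X)>0$ and $C_X>0$ such that for every $\RR$-semisimple $Y$ with $|Y-X|\le\delta_0$ one can choose $g=g(Y)\in\GG$ with $|\log g|\le C_X|Y-X|$ and $X':=\Ad(g^{-1})Y\in\mathcal{C}$. When $X$ is regular this is immediate from the implicit function theorem applied to $(h,Z)\mapsto \Ad(h)(X+Z)$ on $\GG\times\mathcal{C}$, using that $\mathrm{ad}_X$ restricts to an isomorphism on $\bigoplus_{\mu\ne 0}\mathfrak{g}^\mu$. When $X$ is singular, one first applies the implicit function theorem on $\GG\times\mathfrak{z}(X)$ to move $Y$ into $X+\mathfrak{z}(X)$, and then conjugates inside the centralizer $Z_\GG(X)$, using that $\mathcal{C}$ is a maximal $\RR$-split Cartan of the reductive subalgebra $\mathfrak{z}(X)$.

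Setting $\tilde f=\pi(g^{-1})f$ and $\tilde g=\pi(g^{-1})g$, the identity $\Ad(g^{-1})Y=X'$ together with unitary equivariance turns $(Y+m)f=g$ into $(X'+m)\tilde f=\tilde g$. Writing $X'=|X'|X''$ with $X''\in\mathcal{C}\cap\mathfrak{G}^1$, this becomes
\[
    (X''+m/|X'|)\,\tilde f \;=\; |X'|^{-1}\tilde g.
\]
By continuity $X''\to X/|X|$ and $|X'|\to|X|$ as $Y\to X$, so $y_0(X'')\to y_0(X/|X|)>0$ and the ratio $(m/|X'|)/y_0(X'')$ is uniformly bounded on a suitable neighborhood by a constant $s_0(X,m_0)$. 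For $s>s_0$, parts~\eqref{for:84} and~\eqref{for:87} of Theorem~\ref{th:6} applied to the transformed equation yield $\tilde f\in\mathcal{H}^s$ with the stated Sobolev estimates. Pulling back by the unitary $\pi(g)$, whose operator norm on each $\mathcal{H}^t$ is controlled polynomially in $|\log g|$ and $t$ (hence uniformly bounded for $|Y-X|\le\delta$), transfers the bounds to $f$. The bijection $\mathcal{D}\mapsto\mathcal{D}\circ\pi(g)$ identifies $(Y-m)$-invariant distributions with $(X'-m)$-invariant, equivalently $(X''-m/|X'|)$-invariant, distributions, so the obstruction condition in~\eqref{for:104} matches that of Theorem~\ref{th:6}\eqref{for:87}.

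The main technical obstacle is the structural lemma in the singular case: the direct implicit function argument is not submersive at $(e,0)$ when $\mathfrak{z}(X)\supsetneq\mathcal{C}$, so one must work in two stages and carefully bound the size of the auxiliary conjugating element coming from $Z_\GG(X)$. A secondary bookkeeping issue is tracking how the constants in Theorem~\ref{th:6} depend on the perturbed parameters $X''$ and $m/|X'|$, and how the Sobolev operator norm of $\pi(g)$ grows with $s$; these together dictate the dependence $\delta=\delta(X,s,m_0)$.
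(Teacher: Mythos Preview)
Your approach via conjugation is genuinely different from the paper's and, as written, contains a gap that is not merely bookkeeping.

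The paper does \emph{not} conjugate $Y$ into $\mathcal{C}$; instead it repeats the machinery behind Theorem~\ref{th:6} directly for $Y$, with one crucial new ingredient. It introduces the ideal $\mathbf{g}$ generated by the nonzero $\mathrm{ad}_X$-eigenspaces and splits the eigenvalues of $\mathrm{ad}_Y$ into two sets: $\Phi_1(Y)$ (the spectrum on $\mathbf{g}$) and $\Phi_2(Y)$ (the spectrum on the complementary ideal, hence close to $0$). For eigendirections with $\mu\in\Phi_2(Y)$ small and positive it does \emph{not} use the estimate from Corollary~\ref{cor:1}\eqref{for:78}, but rather the ``sub-threshold'' estimate of Corollary~\ref{cor:1}\eqref{for:85}, whose constant $C(m-t\mu)^{-1}$ stays bounded precisely because $\mu$ is small. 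Only for $\mu\in\Phi_1(Y)$, where the minimum positive eigenvalue $w_0$ is bounded below (assumption~(1) in the paper's proof), does it invoke the analogue of the estimates \eqref{for:82}--\eqref{for:83}.

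Your reduction collapses this distinction. After conjugation, $X''\in\mathcal{C}$ is close to $X$; if $X$ lies on a wall of the Weyl chamber (or has zero component in some simple factor), then $X''$ can have positive $\mathrm{ad}$-eigenvalues arbitrarily close to $0$, so $x_0(X'')\to 0$. But the constants in Theorem~\ref{th:6} genuinely depend on $x_0$: in the proof one chooses $\epsilon\le x_0/2$ to control the Sobolev increment $\epsilon/x_0$, and the factor $C_\epsilon$ (coming ultimately from bounding $|\log r|^n$ by $r^{\pm\epsilon}$ in Lemma~\ref{le:14}) blows up as $\epsilon\to 0$. So Theorem~\ref{th:6} applied to $X''$ as a black box does not give uniform constants. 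To repair this you would have to reopen the proof and treat the small-eigenvalue directions separately via the $s\le m/\lambda_1$ estimate---which is exactly the paper's two-regime argument. Thus the conjugation step buys nothing, and what you call ``secondary bookkeeping'' is in fact the substance of the theorem.
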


\subsection{Results for the twisted cocycle rigidity}

\begin{theorem}\label{th:7}
Suppose $(\pi,\mathcal{H})$ is a unitary representation of $\mathbb{G}$ such that the restriction of $\pi$ to any simple factor of $\GG$ is isolated from the trivial
representation (in the Fell topology) and $g_1,g_2\in \mathcal{H}^s$, $s\in 2\NN\cup 0$, $s>\frac{m}{y_0}$. Also suppose $\mathfrak{u}\in \mathfrak{G}$ is either nilpotent or in a $\RR$-split Cartan algebra. If $[X,\mathfrak{u}]=0$ and
\begin{align*}
 (X+m)g_1=(\mathfrak{u}+m_1)g_2
\end{align*}
where $m_1\in\RR$ then there is a common solution $h\in \mathcal{H}^s$, that is,
\begin{align*}
 (\mathfrak{u}+m_1)h=g_1,\quad\text{and}\quad (X+m)h=g_2
\end{align*}
with estimates
\begin{align*}
    \norm{h}_t\leq C_{t,m}\max\{\norm{g_1}_{\max\{t+2,s\}},\,\norm{g_2}_{\max\{t+2,s\}}\}\qquad 0\leq t\leq s.
\end{align*}
\end{theorem}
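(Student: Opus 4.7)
The plan is to construct $h$ by solving $(X+m)h=g_2$ via Theorem~\ref{th:6} and then to deduce $(\mathfrak{u}+m_1)h=g_1$ automatically from the commutativity $[X,\mathfrak{u}]=0$. Since $g_2\in \mathcal{H}^s\subset\mathcal{H}$ and $m>0$, part~\eqref{for:88} of Theorem~\ref{th:6} produces a unique $h\in\mathcal{H}$ with $(X+m)h=g_2$ and $\norm{h}\leq m^{-1}\norm{g_2}$. Because $X+m$ and $\mathfrak{u}+m_1$ commute on $\mathcal{H}^{\infty}$, and this commutation extends distributionally to all of $\mathcal{H}$, one gets
\begin{align*}
(X+m)\bigl[(\mathfrak{u}+m_1)h-g_1\bigr]=(\mathfrak{u}+m_1)(X+m)h-(X+m)g_1=(\mathfrak{u}+m_1)g_2-(X+m)g_1=0,
\end{align*}
and injectivity of $X+m$ on $\mathcal{H}$ then forces $(\mathfrak{u}+m_1)h=g_1$.

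The central task is to upgrade $h$ from $\mathcal{H}$ to $\mathcal{H}^s$. I would invoke part~\eqref{for:87} of Theorem~\ref{th:6}, whose hypothesis demands $\mathcal{D}(g_2)=0$ for every $(X-m)$-invariant distribution $\mathcal{D}$. Here one uses the hypothesis $[X,\mathfrak{u}]=0$ to observe that the commutator $[X-m,\,m_1-\mathfrak{u}]$ vanishes, so $m_1-\mathfrak{u}$ stabilises the space of $(X-m)$-invariant distributions. Dualising, $\mathcal{D}\bigl((\mathfrak{u}+m_1)g_2\bigr)=\bigl((m_1-\mathfrak{u})\mathcal{D}\bigr)(g_2)$, and the cocycle relation $(X+m)g_1=(\mathfrak{u}+m_1)g_2$ gives
\begin{align*}
\bigl((m_1-\mathfrak{u})\mathcal{D}\bigr)(g_2)=\mathcal{D}\bigl((X+m)g_1\bigr)=0
\end{align*}
for every $(X-m)$-invariant $\mathcal{D}$. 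Hence it is enough to show that $m_1-\mathfrak{u}$ acts surjectively on the space of $(X-m)$-invariant distributions.

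This surjectivity is the main obstacle and must be extracted from the explicit Mackey/Mellin description of $(X-m)$-invariant distributions built earlier in the paper. When $\mathfrak{u}$ lies in a Cartan subalgebra, $\mathfrak{u}$ acts diagonally on each irreducible spectral piece by a pure-imaginary scalar, so $m_1-\mathfrak{u}$ is componentwise invertible outside a discrete exceptional set, which must be handled by direct inspection of the corresponding eigendistributions. When $\mathfrak{u}$ is nilpotent, $\mathfrak{u}$ acts on the same spectral model as a raising-type operator, and one inverts $m_1-\mathfrak{u}=m_1(I-m_1^{-1}\mathfrak{u})$ by a Neumann-type expansion whose convergence is controlled by the exponential decay of matrix coefficients. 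Once $\mathcal{D}(g_2)=0$ has been verified for every $(X-m)$-invariant $\mathcal{D}$, part~\eqref{for:87} of Theorem~\ref{th:6} produces $h\in\mathcal{H}^s$ with the tame bound
\begin{align*}
\norm{h}_t\leq C_{t,m}\norm{g_2}_{\max\{t+2,s\}}\leq C_{t,m}\max\bigl\{\norm{g_1}_{\max\{t+2,s\}},\,\norm{g_2}_{\max\{t+2,s\}}\bigr\},
\end{align*}
which is precisely the claimed estimate and completes the argument.
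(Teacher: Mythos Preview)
Your overall plan---solve $(X+m)h=g_2$, deduce the second equation by commutativity, then upgrade regularity---is sound and is in fact how the paper proceeds. The genuine gap is in your surjectivity step, which is the entire content of the theorem.

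For nilpotent $\mathfrak{u}$ your Neumann-series argument fails: ``nilpotent'' here means $\mathrm{ad}_{\mathfrak{u}}$ is nilpotent on $\mathfrak{G}$, but $d\pi(\mathfrak{u})$ is an unbounded skew-adjoint operator, not a nilpotent one, so $\sum (m_1^{-1}\mathfrak{u})^k$ has no reason to converge. The appeal to exponential decay of matrix coefficients is also misplaced, since unipotent one-parameter subgroups do not have exponentially decaying coefficients. And when $m_1=0$ (which the theorem allows) the expression $m_1(I-m_1^{-1}\mathfrak{u})$ is meaningless; you would need $\mathfrak{u}$ itself to act surjectively on the $(X-m)$-invariant distributions, and nothing you wrote addresses that. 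For the Cartan case, the $(X-m)$-invariant distributions in the paper are described via the Mellin model for the subgroup generated by $\{X,u_1\}$; since $\mathfrak{u}$ need not commute with $u_1$, it does not act diagonally on that decomposition, so your ``pure-imaginary scalar'' picture is not available as stated.

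The paper sidesteps all of this by choosing the right subgroup before decomposing. In the nilpotent case one uses that $\mathrm{ad}_{\mathfrak{u}}$ preserves $\mathfrak{g}^{y_0}$ and is nilpotent there, so there exists $0\neq\mathfrak{v}\in\mathfrak{g}^{y_0}$ with $[\mathfrak{u},\mathfrak{v}]=0$. Then $\{X,\mathfrak{v},\mathfrak{u}\}$ generates a subgroup isomorphic to $(\RR\ltimes\RR)\times\RR$, and by Howe--Moore the restriction of $\pi$ has no $\mathfrak{u}$-invariant vectors. In each irreducible $\beta^{\delta}\otimes\zeta_v$ of this subgroup, $\mathfrak{u}$ acts by the scalar $\sqrt{-1}\,v$ with $v\neq 0$, so $m_1+\sqrt{-1}\,v\neq 0$ automatically; the cocycle relation then forces $\mathcal{D}_{\delta,m}(g_{2,v})=0$ (Proposition~\ref{po:3}). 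This yields $(I-\mathfrak{v}^2)^{s/2}h\in\mathcal{H}$, and one then invokes part~\eqref{for:4} of Theorem~\ref{th:6} (the regularity criterion on $h$), not part~\eqref{for:87}. The Cartan case is reduced to this one by replacing the pair $(X,\mathfrak{u})$ with $(X,\,X-\lambda^{-1}y_0\,\mathfrak{u})$, the second element now commuting with a suitable $\mathfrak{v}\in\mathfrak{g}^{y_0}$.
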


\section{Preliminaries on unitary representation theory}
\subsection{ Mackey representation theory} The problem of determining the complete set of equivalence classes of unitary irreducible
representations of a general class of semi-direct product groups has been solved by Mackey \cite{Mac}.
\begin{theorem}\label{th:1}(Mackey theorem, see \cite[Ex 7.3.4]{Zimmer}, \cite[III.4.7]{Margulis}) Let $S$ be a locally compact second countable group and $\mathcal{N}$ be an abelian closed
normal subgroup of $S$. We define the natural action of $S$ on the group of characters $\widehat{\mathcal{N}}$ of the group $\mathcal{N}$ by setting
\begin{align*}
    (s\chi)(\mathfrak{n}):=\chi(s^{-1}\mathfrak{n}s),\qquad s\in S,\,\chi\in \widehat{\mathcal{N}}, \,\mathfrak{n}\in \mathcal{N}.
\end{align*}
Assume that every orbit $S\cdot \chi$, $\chi\in \widehat{\mathcal{N}}$ is locally closed in $\widehat{\mathcal{N}}$. Then for
any irreducible unitary representation $\pi$ of $S$, there is a point $\chi_0\in \widehat{\mathcal{N}}$ with $S_{\chi_0}$
its stabilizer in $S$, a measure $\mu$ on $\widehat{\mathcal{N}}$ and an irreducible unitary representation  $\sigma$ of $S_{\chi_0}$ such that
\begin{enumerate}
  \item $\pi=\text{Ind}_{S_{\chi_0}}^S(\sigma)$,
  \item $\sigma\mid_{\mathcal{N}}=(\dim)\chi_0$,
  \item $\pi(x)=\int_{\widehat{\mathcal{N}}}\chi(x)d\mu(\chi)$, for any $x\in \mathcal{N}$; and $\mu$ is ergodically supported on the orbit $S\cdot \chi_0$.
\end{enumerate}
\end{theorem}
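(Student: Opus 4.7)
The plan is to follow Mackey's original approach via the theory of \emph{systems of imprimitivity}. First I would apply the SNAG (Stone--Naimark--Ambrose--Godement) theorem to the restriction $\pi|_{\mathcal{N}}$: since $\mathcal{N}$ is locally compact abelian, there exists a unique regular projection-valued measure $E$ on the Borel $\sigma$-algebra of $\widehat{\mathcal{N}}$ such that
\begin{align*}
\pi(\mathfrak{n}) = \int_{\widehat{\mathcal{N}}} \chi(\mathfrak{n})\, dE(\chi), \qquad \mathfrak{n}\in\mathcal{N}.
\end{align*}
Conjugating by $\pi(s)$ and using $\pi(s)\pi(\mathfrak{n})\pi(s)^{-1} = \pi(s\mathfrak{n}s^{-1})$, the uniqueness clause of SNAG forces the covariance relation $\pi(s)E(B)\pi(s)^{-1} = E(s\cdot B)$ for every Borel $B\subseteq \widehat{\mathcal{N}}$. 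Thus $(\pi, E)$ is a system of imprimitivity for $S$ on $\widehat{\mathcal{N}}$, and the conjugation action $(s\chi)(\mathfrak{n})=\chi(s^{-1}\mathfrak{n}s)$ is already built into its structure.

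Second, I would pin down the support of $E$. Let $\mu$ be a scalar measure in the measure class of $E$; the covariance relation shows that $[\mu]$ is $S$-invariant. Irreducibility of $\pi$ forces any bounded $S$-invariant measurable function on $\widehat{\mathcal{N}}$, acting via $E$, to be a scalar, so $\mu$ is ergodic for the $S$-action on $\widehat{\mathcal{N}}$. Here the local-closedness of each orbit $S\cdot\chi$ is decisive: by a standard argument it implies that the orbit space $\widehat{\mathcal{N}}/S$ is countably separated (a standard Borel space), and Mackey's point realization theorem for ergodic measures on such quotients then shows that any ergodic $S$-invariant measure class is concentrated on a single orbit $S\cdot\chi_0$. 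This delivers clause (3) of the theorem, including the integral formula over $\mathcal{N}$.

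Finally, identifying $S\cdot\chi_0$ with the homogeneous space $S/S_{\chi_0}$, I would invoke Mackey's \emph{imprimitivity theorem}: any system of imprimitivity of $S$ based on $S/S_{\chi_0}$ is unitarily equivalent to one arising from an induced representation $\text{Ind}_{S_{\chi_0}}^{S}(\sigma)$ for a unique (up to unitary equivalence) unitary representation $\sigma$ of the stabilizer, realized on $L^2$-sections of a Hilbert bundle over $S/S_{\chi_0}$. Unraveling the construction, the fiber representation must act on $\mathcal{N}\subseteq S_{\chi_0}$ by the character $\chi_0$, giving $\sigma|_{\mathcal{N}} = (\dim\sigma)\chi_0$, which is clause (2); and irreducibility of $\pi$ forces irreducibility of $\sigma$ via the standard Mackey criterion, since any proper invariant subspace of $\sigma$ would induce a proper invariant subspace of $\pi$. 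The hard part is the imprimitivity theorem itself: its proof requires constructing the induced-representation functor, verifying the bijection between imprimitivity systems based on $S/S_{\chi_0}$ and representations of $S_{\chi_0}$, and handling measurability subtleties in the Hilbert-bundle picture (choice of quasi-invariant measure on $S/S_{\chi_0}$, Borel cross-sections, cocycles). A secondary technical point is the ergodic-support reduction, which genuinely needs the locally-closed orbit hypothesis; without it the quotient Borel structure can fail to be standard and one cannot conclude that ergodic measures live on single orbits.
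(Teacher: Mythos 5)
The paper does not prove this statement; it is Mackey's theorem, quoted verbatim with citations to Zimmer and Margulis. Your sketch --- SNAG applied to $\pi|_{\mathcal{N}}$ to obtain a projection-valued measure, covariance relation giving a system of imprimitivity, irreducibility plus locally-closed orbits forcing the ergodic measure class onto a single orbit $S\cdot\chi_0 \cong S/S_{\chi_0}$, and then the imprimitivity theorem to produce the induced representation with $\sigma|_{\mathcal{N}}$ a multiple of $\chi_0$ --- is a correct outline of the standard Mackey-machine proof given in exactly those references, so you have reproduced the argument the paper delegates to rather than a genuinely different one.
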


\subsection{Sobolev space and elliptic regularity theorem}\label{sec:17} Let $\pi$ be a unitary representation of a Lie group $G$ with Lie algebra $\mathfrak{g}$ on a
Hilbert space $\mathcal{H}=\mathcal{H}(\pi)$.
\begin{definition}\label{de;1}
For $k\in\NN$, $\mathcal{H}^k(\pi)$ consists of all $v\in\mathcal{H}(\pi)$ such that the
$\mathcal{H}$-valued function $g\rightarrow \pi(g)v$ is of class $C^k$ ($\mathcal{H}^0=\mathcal{H}$). For $X\in\mathfrak{g}$, $d\pi(X)$ denotes the infinitesimal generator of the
one-parameter group of operators $t\rightarrow \pi(\exp tX)$, which acts on $\mathcal{H}$ as an essentially skew-adjoint operator. For any $v\in\mathcal{H}$, we also write $Xv:=d\pi(X)v$.
\end{definition}
We shall call $\mathcal{H}^k=\mathcal{H}^k(\pi)$ the space of $k$-times differentiable vectors for $\pi$ or the \emph{Sobolev space} of order $k$. The
following basic properties of these spaces can be found, e.g., in \cite{Nelson}, \cite{Goodman} and \cite{Robinson}:
\begin{enumerate}
  \item $\mathcal{H}^k=\bigcap_{m\leq k}D(d\pi(Y_{j_1})\cdots d\pi(Y_{j_m}))$, where $\{Y_j\}$ is a basis for $\mathfrak{g}$, and $D(T)$
denotes the domain of an operator on $\mathcal{H}$.

\medskip
  \item $\mathcal{H}^k$ is a Hilbert space, relative to the inner product
  \begin{align*}
    \langle v_1,\,v_2\rangle_{G,k}:&=\sum_{1\leq m\leq k}\langle Y_{j_1}\cdots Y_{j_m}v_1,\,Y_{j_1}\cdots Y_{j_m}v_2\rangle+\langle v_1,\,v_2\rangle
  \end{align*}
  \item The spaces $\mathcal{H}^k$ coincide with the completion of the
subspace $\mathcal{H}^\infty\subset\mathcal{H}$ of \emph{infinitely differentiable} vectors with respect to the norm
\begin{align*}
    \norm{v}_{G,k}=\bigl\{\norm{v}^2+\sum_{1\leq m\leq k}\norm{Y_{j_1}\cdots Y_{j_m}v}^2\bigl\}^{\frac{1}{2}}.
  \end{align*}
induced by the inner product in $(2)$. The subspace $\mathcal{H}^\infty$
coincides with the intersection of the spaces $\mathcal{H}^k$ for all $k\geq 0$.

\medskip
\item $\mathcal{H}^{-k}$, defined as the Hilbert space duals of
the spaces $\mathcal{H}^{k}$, are subspaces of the space $\mathcal{E}(\mathcal{H})$ of distributions, defined as the
dual space of $\mathcal{H}^\infty$.
  \end{enumerate}
We write $\norm{v}_{k}:=\norm{v}_{G,k}$ and $ \langle v_1,\,v_2\rangle_{k}:= \langle v_1,\,v_2\rangle_{G,k}$ if there is no confusion. Otherwise,
we use subscripts to emphasize that the regularity is measured with respect to $G$.

If $G=\RR^n$ and $\mathcal{H}=L^2(\RR^n)$, the set of square integrable functions on $\RR^n$, then $\mathcal{H}^k$ is the space consisting of all functions on $\RR^n$ whose first $s$ weak derivatives are functions in $L^2(\RR^n)$. In this case, we use the notation $W^k(\RR^n)$ instead of $\mathcal{H}^k$ to avoid confusion. For any open set $\mathcal{O}\subset\RR^n$, $\norm{\cdot}_{(C^r,\mathcal{O})}$ stands for $C^r$ norm for functions having continuous derivatives up to order $r$ on $\mathcal{O}$. We also write $\norm{\cdot}_{C^r}$ if there is no confusion.

We list the well-known elliptic regularity theorem which will be frequently
used in this paper (see \cite[Chapter I, Corollary 6.5 and 6.6]{Robinson}):
\begin{theorem}\label{th:4}
Fix a basis $\{Y_j\}$ for $\mathfrak{g}$ and set $L_{2m}=\sum Y_j^{2m}$, $m\in\NN$. Then
\begin{align*}
    \norm{v}_{2m}\leq C_m(\norm{L_{2m}v}+\norm{v}),\qquad \forall\, m\in\NN
\end{align*}
where $C_m$ is a constant only dependent on $m$ and $\{Y_j\}$.
\end{theorem}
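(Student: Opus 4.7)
The strategy is to exploit two ingredients: (i) each $Y_j$ is essentially skew-adjoint on smooth vectors, so integration by parts is available; and (ii) the principal symbol $\sum_j \xi_j^{2m}$ of $L_{2m}$ vanishes only at the origin, so $L_{2m}$ is elliptic of order $2m$ in the universal enveloping algebra. Together these let me control every monomial $Y_{i_1}\cdots Y_{i_{2m}}$ applied to $v$ by the single operator $L_{2m}$, modulo lower-order commutator remainders. I proceed by induction on $m$.

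For the base case $m=1$, skew-adjointness gives $\sum_j \norm{Y_j v}^2 = -\langle v, L_2 v\rangle \leq \norm{v}\,\norm{L_2 v}$, which by Young's inequality is absorbed into $\norm{L_2 v}^2+\norm{v}^2$ up to a constant. For the mixed second-order terms I compute
\begin{align*}
\sum_{j,k}\norm{Y_j Y_k v}^2 &= -\sum_k\langle Y_k v, L_2 Y_k v\rangle\\
&= -\sum_k\langle Y_k v, Y_k L_2 v\rangle \;-\;\sum_k\langle Y_k v, [L_2, Y_k] v\rangle\\
&= \norm{L_2 v}^2 \;-\; \sum_k\langle Y_k v, [L_2, Y_k] v\rangle.
\end{align*}
Since $[L_2, Y_k]=\sum_j\bigl(Y_j[Y_j,Y_k]+[Y_j,Y_k]Y_j\bigr)$ is a first-order operator in the enveloping algebra, the last sum is $O(\norm{v}_1^2)$, which has already been controlled by $\norm{L_2 v}\,\norm{v}$ in the previous step. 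Combining yields the base case.

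For the inductive step, assume the estimate holds with $m-1$ in place of $m$. I want to show that every monomial $Y_{i_1}\cdots Y_{i_{2m}}$ applied to $v$ is controlled by $\norm{L_{2m} v}+\norm{v}$. The key observation is that the symbolic equivalence $|\xi|^{2m}\simeq\sum_j\xi_j^{2m}$ translates, at the level of the enveloping algebra, into the fact that any such monomial can be rewritten, modulo operators of total order at most $2m-1$, as a linear combination of $Y_j^{2m}$'s applied to reorderings of $v$. The skew-adjointness trick from the base case then collapses the top-order contribution into $\norm{L_{2m} v}^2$, while the remainders of order $\leq 2m-1$ are controlled by $\norm{v}_{2m-1}$ and absorbed via the inductive hypothesis combined with an interpolation estimate of the form $\norm{v}_{2m-1}^2\leq \varepsilon\norm{L_{2m} v}^2+C_\varepsilon\norm{v}^2$.

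The main obstacle is the bookkeeping in the inductive step: each rearrangement of a long monomial produces a cascade of commutators $[Y_j,Y_k]=\sum_\ell c_{jk}^\ell Y_\ell$, and one must verify that the cumulative order of the remainders strictly drops below $2m$ so that the inductive hypothesis applies. A cleaner, spectral-theoretic route (which is the approach in \cite{Robinson}) is to invoke Nelson's theorem to identify $\mathcal{H}^{2m}$ with the domain of the closure of $(1-\Delta)^m$ on $\mathcal{H}^\infty$, where $\Delta=\sum_j Y_j^2$, and then compare with $L_{2m}$ via the functional calculus, using that both are elliptic self-adjoint operators of order $2m$ with equivalent principal symbols; the estimate then follows from the closed graph theorem applied to the bounded invertibility of $(1+L_{2m})^{-1}:\mathcal{H}\to \mathcal{H}^{2m}$.
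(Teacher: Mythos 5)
The paper gives no proof of this statement: it cites \cite[Chapter I, Corollary 6.5 and 6.6]{Robinson}, so there is no ``paper argument'' to compare against. Judged on its own merits, your base case $m=1$ is essentially correct: the identity $\sum_j \norm{Y_j v}^2 = -\langle v, L_2 v\rangle$ and the rearrangement of $\sum_{j,k}\norm{Y_jY_kv}^2 = \norm{L_2 v}^2 - \sum_k\langle Y_k v, [L_2,Y_k]v\rangle$ are both right, and the commutator term is genuinely first order. This is the standard Nelson--Goodman integration-by-parts device.

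The inductive step, however, rests on a false reduction. You claim that any monomial $Y_{i_1}\cdots Y_{i_{2m}}$ can be rewritten, modulo operators of order $\leq 2m-1$, as a linear combination of the pure powers $Y_j^{2m}$. This is not true in the enveloping algebra. The Poincar\'e--Birkhoff--Witt theorem lets you reorder a monomial into an ordered monomial $Y_1^{a_1}\cdots Y_n^{a_n}$ plus lower order, but it does not let you eliminate the mixed ordered monomials. The simplest counterexample is the abelian Lie algebra $\RR^2$: there the enveloping algebra is the commutative polynomial ring $\RR[Y_1,Y_2]$, all commutators vanish so there are no ``lower-order remainders'' to absorb into, and $Y_1Y_2$ is manifestly not in the span of $Y_1^2, Y_2^2$. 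So the symbol comparison $\abs{\xi}^{2m}\simeq\sum_j\xi_j^{2m}$, while true as an inequality of nonnegative polynomials, does not translate into an algebraic identity in $\mathcal{U}(\mathfrak{g})$; it must be exploited through the functional calculus of a self-adjoint operator, not through rewriting in the enveloping algebra. Relatedly, the interpolation inequality $\norm{v}_{2m-1}^2\leq\varepsilon\norm{L_{2m}v}^2+C_\varepsilon\norm{v}^2$ you invoke is itself a consequence of the theorem you are trying to prove (or of the self-adjoint functional calculus for $\Delta$), so quoting it inside the induction is circular unless you establish it independently.

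The spectral route you sketch at the end is the one that actually works, but it needs to be the main argument, not an afterthought, and it needs two nontrivial inputs: Nelson's essential self-adjointness of $\Delta=\sum_j Y_j^2$ on $\mathcal{H}^\infty$, and the Nelson--Stinespring inequality $\norm{Y_{j_1}\cdots Y_{j_k}v}\leq C_k\norm{(I-\overline{\Delta})^{k/2}v}$, which is proved by iterating $\sum_j\norm{Y_jw}^2\leq\langle w,-\Delta w\rangle$ and controlling the commutators $[\Delta, Y_j]$ spectrally. Once one knows $\mathcal{H}^{2m}=D\bigl((I-\overline{\Delta})^m\bigr)$ with equivalent norms, the comparison with $L_{2m}$ (equivalently with its closure) follows by a closed-graph argument together with the operator inequality $\abs{\xi}^{2m}\leq C_m\bigl(1+\sum_j\xi_j^{2m}\bigr)$ applied through the joint spectral resolution. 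That chain of reasoning is what you should write out; the commutator-shuffling induction as stated does not close.
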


\subsection{Direct decompositions of Sobolev space}\label{sec:3}
For any Lie group $G$ of type $I$ and its unitary representation $\rho$, there is a decomposition of $\rho$ into a direct integral
\begin{align}\label{for:50}
 \rho=\int_Z\rho_zd\mu(z)
\end{align}
of irreducible unitary representations for some measure space $(Z,\mu)$ (we refer to
\cite[Chapter 2.3]{Zimmer} or \cite{Margulis} for more detailed account for the direct integral theory). All the operators in the enveloping algebra are decomposable with respect to the direct integral decomposition \eqref{for:50}. Hence there exists for all $s\in\RR$ an induced direct
decomposition of the Sobolev spaces:
\begin{align}\label{for:51}
\mathcal{H}^s=\int_Z\mathcal{H}_z^sd\mu(z)
\end{align}
with respect to the measure $d\mu(z)$.

The existence of the direct integral decompositions
\eqref{for:50}, \eqref{for:51} allows us to reduce our analysis of the
cohomological equation to irreducible unitary representations. This point of view is
essential for our purposes.
\section{Explicit calculations based on Mackey theory}\label{sec:10}
Suppose $X\in \mathcal{C}$ and $u_1,\,u_2\in \mathcal{B}$ such that
\begin{align}\label{for:23}
 [X,u_i]=\lambda_i u_i, \, i=1,\,2 \quad \text{ and }\quad [u_1,u_2]=0,
\end{align}
where $\lambda_1\lambda_2\neq0$.  Let $S$ denote the connected subgroup with Lie algebra $\{X,\,u_1,\,u_2\}$ and $G$ denote the connected subgroup with Lie algebra $\{X,\,u_1\}$.
\subsection{Unitary dual of $G$} Let $G_1$ denote the connected subgroup of $G$ with Lie algebra spanned by $\{u_1\}$. Then $G_1$ is a normal subgroup of $G$ and $G$ is isomorphic to $\RR\ltimes\RR$.
The group action is defended by
\begin{align*}
  &\text{\small$\exp(\log s_1\cdot X)\big( \exp(\log s_2\cdot X),\, \exp(t_1u_1)\big)\exp(-\log s_1\cdot X)$}\\
  &=\text{\small$\big( \exp(\log s_2\cdot X),\, \exp(s_1^{\lambda_1} t_1u_1)\big)$},
\end{align*}
for any $s_i>0$ and $t_1\in\RR$.

This allows us to completely determine the orbits of the dual action of the group $G_2=\{\exp(\log s\cdot X)\}_{s>0}$ on $G_1$ and the corresponding
representations. The orbits fall into three classes:
\begin{enumerate}
  \item the origin and its stabilizer is $G_2$;
\smallskip

 \item $\{\exp(tu_1):t>0\}$ and for the typical point $\exp(u_1)$ its stabilizer is trivial;

 \smallskip

 \item $\{\exp(tu_1):t<0\}$ and for the typical point $\exp(-u_1)$ its stabilizer is trivial.

\end{enumerate}
The first factors to a representation of $G_2$, which means that $G_1$ acts trivially. Then by using Theorem \ref{th:1} we have
\begin{lemma}
The irreducible representations of $G$ without non-trivial $G_1$-fixed vectors are induced representations and the group action is defined by:
  \begin{gather*}
\beta^\delta: G\rightarrow \mathcal{B}(\mathbb{E}^\delta)\\
\beta^\delta(\exp(\log s\cdot X),0)f(r)=f(s^{-1}r) \\
\beta^\delta(e,\exp(t_1u_1))f(r)=e^{\small\text{$\sqrt{-1} (-1)^\delta t_1r^{-\lambda_1}$}}f(r),
\end{gather*}
for any $t_1\RR$ and $s>0$, where $\delta\in \{+,\,-\}$ and $\lambda_1$ is given by \eqref{for:23}; and
\begin{align*}
 \norm{f}_{\mathbb{E}^\delta}=\|f\|_{L^2(\RR^+, \frac{1}{r}dr)}.
\end{align*}
Computing derived representations, we get
\begin{align}\label{for:41}
 X=-r\partial_r,\qquad u_1=(-1)^\delta r^{-\lambda_1}\sqrt{-1}.
\end{align}
\end{lemma}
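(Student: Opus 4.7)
The plan is to apply the Mackey machine of Theorem~\ref{th:1} to $G\cong\RR\ltimes\RR$ with normal subgroup $G_1$. Since $G_1\cong\RR$ is abelian, I identify $\widehat{G_1}\cong\RR$ via $\xi\mapsto\chi_\xi$, $\chi_\xi(\exp(tu_1))=e^{\sqrt{-1}\xi t}$. Using the conjugation formula $\exp(\log s\cdot X)\exp(tu_1)\exp(-\log s\cdot X)=\exp(s^{\lambda_1}tu_1)$ displayed in the excerpt, a direct computation gives $s\cdot\chi_\xi=\chi_{s^{-\lambda_1}\xi}$; since $\lambda_1\neq 0$, the $G$-orbits on $\widehat{G_1}$ are precisely $\{0\}$, $\RR^+$, and $\RR^-$, matching the three classes listed in the excerpt.

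The next step is to discard the trivial orbit $\{0\}$, whose associated representations factor through $G/G_1\cong G_2$ and therefore fix every vector under $G_1$, contrary to hypothesis. On each of the two remaining orbits I choose the representative $\chi_\delta$, $\delta\in\{+,-\}$, defined by $\chi_\delta(\exp(tu_1))=e^{\sqrt{-1}(-1)^\delta t}$. Because the $G_2$-action on $\RR\setminus\{0\}$ is free, the Mackey stabilizer of $\chi_\delta$ in $G$ equals $G_1$ itself, and the only irreducible representation of this stabilizer whose restriction to $G_1$ is $\chi_\delta$ is $\chi_\delta$ itself. Theorem~\ref{th:1} then asserts that every irreducible unitary representation of $G$ without a $G_1$-fixed vector is equivalent to $\beta^\delta:=\mathrm{Ind}_{G_1}^{G}\chi_\delta$ for a unique $\delta$.

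The third step is to realize $\beta^\delta$ concretely. Parametrize $G_2$ by $r>0$ via $\exp(\log r\cdot X)$, so that $G/G_1\cong\RR^+$ carries the Haar measure $dr/r$; identifying $F\in\mathrm{Ind}_{G_1}^{G}\chi_\delta$ with $f(r):=F(\exp(\log r\cdot X))$ unitarily places $\beta^\delta$ on $L^2(\RR^+,dr/r)$. For $g_0=\exp(\log s\cdot X)$ the left translation $F(g)\mapsto F(g_0^{-1}g)$ transfers directly to $f(r)\mapsto f(s^{-1}r)$. For $g_0=\exp(t_1u_1)$ the only real computation is to rewrite $(1,-t_1)(r,0)=(r,0)(1,-r^{-\lambda_1}t_1)$ inside $G\cong\RR^+\ltimes\RR$, whereupon the covariance condition $F(gh)=\chi_\delta(h)^{-1}F(g)$ produces the prescribed multiplier $e^{\sqrt{-1}(-1)^\delta t_1 r^{-\lambda_1}}$.

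Finally, the derived representation formulas follow by differentiating the two one-parameter subgroups at the identity: $Xf(r)=\frac{d}{dt}\big|_{t=0}f(e^{-t}r)=-r\partial_r f(r)$, and $u_1f(r)=\frac{d}{dt}\big|_{t=0}e^{\sqrt{-1}(-1)^\delta r^{-\lambda_1}t}f(r)=(-1)^\delta r^{-\lambda_1}\sqrt{-1}\,f(r)$. No serious obstacle arises in this lemma; it is a bookkeeping exercise in Mackey theory for a solvable group. The only points requiring care are the choice of the Haar measure $dr/r$ on $\RR^+$, which is forced if $\beta^\delta$ is to act unitarily, and the sign conventions that match the two nontrivial orbits with the two values of $\delta$.
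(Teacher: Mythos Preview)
Your proposal is correct and follows exactly the approach the paper itself takes: the paper's argument consists of the orbit analysis preceding the lemma together with the invocation of Theorem~\ref{th:1}, and you have simply written out the concrete realization of the induced representation that the paper leaves implicit. The only additional content you supply is the explicit verification of the group-law identity $(1,-t_1)(r,0)=(r,0)(1,-r^{-\lambda_1}t_1)$ and the differentiation producing \eqref{for:41}, both of which are routine and match the paper's stated formulas.
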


\subsection{Unitary dual of $S$}
Let $S_1$ denote the connected subgroup with Lie algebra $\{u_1,\,u_2\}$. Then $S_1$ is a normal subgroup of $S$; and
$S$ and $S_1$ are isomeric to $\RR\ltimes\RR^2$ and $\RR^2$ respectively. The group action is defended by
\begin{align*}
  &\text{\small$\exp(\log s_1\cdot X)\big( \exp(\log s_2\cdot X),\, \exp(t_1u_1+t_2u_2)\big)\exp(-\log s_1\cdot X)$}\\
  &=\text{\small$\big( \exp(\log s_2\cdot X),\, \exp(s_1^{\lambda_1} t_1u_1+s_1^{\lambda_2} t_2u_2)\big)$},
\end{align*}
for any $s_i>0$ and $t_i\in\RR$, $i=1,\,2$.

This allows us to completely determine the orbits of the dual action of the group $S_2=\{\exp(\log s\cdot X)\}_{s>0}$ on $S_1$ and the corresponding
representations. The orbits fall into five classes:
\begin{enumerate}
  \item the origin and its stabilizer is $S_2$;

\smallskip
  \item $\{\exp(tu_1):t>0\}$ or $\{\exp(tu_1):t<0\}$, and for the typical point $\exp(u_1)$ or $\exp(-u_1)$ the stabilizer is trivial;

  \smallskip
  \item $\{\exp(tu_2):t>0\}$ or $\{\exp(tu_2):t<0\}$, and for the typical point $\exp(u_2)$ or $\exp(-u_2)$ the stabilizer is trivial;

  \smallskip
  \item $\{\exp(s^{\lambda_1} u_1+s^{\lambda_2} s_0u_2): s>0,\,s_0\neq 0\}$, and for the typical point $\{\exp(u_1+s_0u_2): s>0,\,s_0\neq 0\}$ its stabilizer is trivial;

\smallskip

 \item $\{\exp(-s^{\lambda_1} u_1+s^{\lambda_2} s_0u_2): s>0,\,s_0\neq 0\}$, and for the typical point $\{\exp(-u_1+s_0u_2): s>0,\,s_0\neq 0\}$ its stabilizer is trivial.
\end{enumerate}
The first factors to a representation of $S_2$, which means that $S_1$ acts trivially; the second corresponds to representations with the group $\{\exp(\log tu_2)\}_{t\in\RR}$ acts trivially; and the the third corresponds to representations with the group $\{\exp(\log tu_1)\}_{t\in\RR}$ acts trivially. Then by using Theorem \ref{th:1} we have
\begin{lemma}\label{le:15}
The irreducible representations of $S$ without non-trivial $S_3$ or $S_4$-fixed vectors are induced representations and parameterized by
$s_0\in \RR\backslash0$ and the group action is defined by:
  \begin{gather*}
\beta^\delta_{s_0}: S\rightarrow \mathcal{B}(\mathbb{E}^\delta_{s_0})\\
\beta^\delta_{s_0}(\exp(\log s\cdot X),0)f(r)=f(s^{-1}r) \\
\beta^\delta_{s_0}(e,\exp(t_1u_1+t_2u_2))f(r)=e^{\small\text{$\sqrt{-1} ((-1)^\delta t_1r^{-\lambda_1}+s_0t_2r^{-\lambda_2})$}}f(r),
\end{gather*}
for any $t_1,\,t_2\in\RR$ and $s>0$, where $\delta\in \{+,\,-\}$, $\lambda_i$, $i=1,\,2$ is given by \eqref{for:23}; and
\begin{align*}
 \norm{f}_{\mathbb{E}^\delta_{s_0}}=\|f\|_{L^2(\RR^+, \frac{1}{r}dr)}.
\end{align*}
Computing derived representations, we get
\begin{align*}
 X=-r\partial_r,\qquad u_1=(-1)^\delta r^{-\lambda_1}\sqrt{-1},\qquad u_2=s_0 r^{-\lambda_2}\sqrt{-1}.
\end{align*}
\end{lemma}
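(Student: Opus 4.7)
The plan is to apply Mackey's theorem (Theorem \ref{th:1}) to the semi-direct product $S = S_2 \ltimes S_1$ with $\mathcal{N} = S_1 \cong \RR^2$ as the abelian normal subgroup. Identifying $\widehat{S_1}$ with $\RR^2$ via $\chi_{(t_1,t_2)}(\exp(au_1+bu_2)) = e^{\sqrt{-1}(at_1+bt_2)}$ and inserting the conjugation formula displayed just above the lemma, the dual action of $\exp(\log s\cdot X)$ becomes $(t_1,t_2)\mapsto (s^{-\lambda_1}t_1,\, s^{-\lambda_2}t_2)$. Tracking the signs of the two coordinates as $s$ ranges over $\RR^+$ recovers exactly the five orbit classes already listed.

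Next I would isolate the orbits compatible with the hypothesis. On orbits 1, 2, 3 at least one of $t_1,t_2$ vanishes identically, so the corresponding $u_i$ acts as the zero operator in the associated induced representation and its one-parameter subgroup fixes every vector; by the non-trivial $S_3$- and $S_4$-fixed-vector assumption, these are excluded. This leaves exactly orbits 4 and 5, parametrized by a sign $\delta\in\{+,-\}$ and by $s_0\in\RR\setminus 0$. For a typical character $\chi_0=((-1)^\delta, s_0)$ the stabilizer equation in $S_2$ reads $s^{-\lambda_1}(-1)^\delta=(-1)^\delta$, which together with $\lambda_1\neq 0$ and $s>0$ forces $s=1$; hence $S_{\chi_0}=S_1$ and Mackey gives the corresponding irreducible as $\operatorname{Ind}_{S_1}^{S}\chi_0$.

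To get the explicit formulas I would realize $\operatorname{Ind}_{S_1}^{S}\chi_0$ on $L^2(S/S_1)$ using the cross-section $s\mapsto \exp(\log s\cdot X)$ and the Haar measure $\tfrac{ds}{s}$ on $\RR^+$. The $S_2$-part then acts by translation $f(s)\mapsto f(s_1^{-1}s)$, while Mackey's cocycle computation shows that $\exp(t_1u_1+t_2u_2)$ acts by multiplication by $\chi_0$ evaluated at the conjugated argument, namely by $e^{\sqrt{-1}((-1)^\delta t_1 s^{-\lambda_1}+s_0 t_2 s^{-\lambda_2})}$. Relabeling $r=s$ produces the stated formula for $\beta_{s_0}^\delta$, and differentiating at the identity in each one-parameter subgroup yields $X=-r\partial_r$, $u_1=(-1)^\delta r^{-\lambda_1}\sqrt{-1}$, $u_2=s_0 r^{-\lambda_2}\sqrt{-1}$.

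The main obstacle is the explicit realization in the last step: carefully tracking the Mackey cocycle through the chosen cross-section so that the exponents $-\lambda_1,-\lambda_2$ and the sign $(-1)^\delta$ come out correctly, and verifying that distinct $(\delta,s_0)$ actually give inequivalent irreducibles (which follows because they label distinct $S_2$-orbits in $\widehat{S_1}$). Once the cross-section and parametrization are fixed, the remaining checks—irreducibility, the $L^2(\RR^+,\tfrac{dr}{r})$ inner product, and the differentiation producing $X,u_1,u_2$—are routine.
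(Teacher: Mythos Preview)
Your proposal is correct and follows exactly the approach the paper itself takes: the paper lists the five orbit classes of the dual $S_2$-action on $\widehat{S_1}$, discards classes 1--3 on the grounds that one of the unipotent one-parameter groups acts trivially there, and then simply invokes Mackey's theorem (Theorem~\ref{th:1}) to obtain the lemma, without writing out any further details. Your write-up supplies the explicit realization of $\operatorname{Ind}_{S_1}^S\chi_0$ via the cross-section $s\mapsto\exp(\log s\cdot X)$ and the differentiation step that the paper leaves implicit, but the route is the same.
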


\subsection{Mellin transform}

We recall some basic properties of Mellin transform.    For any constant $c \in \CC$, the Mellin transform is defined by
\[
\mathcal{M}(h,c)=\frac{1}{\sqrt{2\pi}}\int_0^\infty h(r) r^{c}dr.
\]
 For any $c_1<c_2$ (resp. $c_1\leq c_2$), set
\begin{align*}
  \text{\small$\langle c_1,c_2\rangle=\{z\in \CC: c_1<\text{Re}(z)<c_2\}$},\text{ and }\text{\small$\langle\langle c_1,c_2\rangle\rangle=\{z\in \CC: c_1\leq\text{Re}(z)\leq c_2\}$}.
\end{align*}
We use $\mathcal{F}(h)$ to denote the Fourier transform
\begin{align*}
 \mathcal{F}(h,\omega)=\frac{1}{\sqrt{2\pi}}\int_{\RR}h(x)e^{\text{\tiny$-x\omega\sqrt{-1}$}}dx.
\end{align*}
If $c=\alpha+\beta\sqrt{-1}$, $\alpha,\,\beta\in \RR$, then we have
\begin{align}\label{for:49}
 \mathcal{M}(h,c)=\sqrt{2\pi}\mathcal{F}(h(e^{-x})e^{-\alpha x},\beta).
\end{align}
Set $\mathbb{E}=L^2(\RR^+, \frac{1}{r}dr)$. The above relation shows that for any $h\in \mathbb{E}$
\begin{equation}\label{eq:mellin_unitary}
\|h\|^2_{\mathbb{E}} = \int_\RR |\mathcal{M}(h,0+s\sqrt{-1})|^2ds\,.
\end{equation}
Note that $\mathbb{E}^\delta=\mathbb{E}^\delta_{s_0}=\mathbb{E}$. Hence the norm in $\mathbb{E}^\delta$ or $\mathbb{E}^\delta_{s_0}$ is equivalent to the norm defined above.

For any function $\varphi$ defined on the strip $\langle\langle c_1,c_2\rangle\rangle$ and $c_1\leq a\leq c_2$, the Mellin inversion formula is given by
\begin{align}\label{for:63}
h(r)=\mathcal{M}^{-1}(\varphi)=\frac{1}{\text{\small$2\pi\sqrt{-1}$}}\int_{\text{\tiny$a-\sqrt{-1}\infty$}}^{\text{\tiny$a+\sqrt{-1}\infty$}}r^{-z}\varphi(z)dz.
\end{align}
In fact we have
\begin{align}\label{for:64}
\mathcal{M}^{-1}(\varphi)(e^{-x})e^{-ax}=\frac{1}{\text{\small$\sqrt{2\pi}$}}\mathcal{F}^{-1}(\varphi_a,x).
\end{align}
where $\varphi_a(t)=\varphi(a+t\sqrt{-1})$.

\begin{lemma}\label{le:14} If $f\in \mathbb{E}$ and $f\cdot r^{-a}\in \mathbb{E}$, $a>0$, then
\begin{enumerate}
  \item \label{for:56} $\mathcal{M}(f,c)$ is analytic on $\langle -a,0\rangle$;

  \medskip
  \item\label{for:57} if $r\partial_rf\in \mathbb{E}$ and $r\partial_rf\cdot r^{-a}\in \mathbb{E}$, then
  \begin{align*}
   \mathcal{M}(r\partial_rf,c)=-c\mathcal{M}(f,c)
  \end{align*}
  if $c\in \langle\langle -a,0\rangle\rangle$.

\end{enumerate}
\end{lemma}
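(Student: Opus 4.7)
The strategy is a standard two-step argument. For part~(1), I split $\mathcal{M}(f,c)$ at $r=1$ and apply Cauchy--Schwarz with respect to the weighted measure $\tfrac{dr}{r}$ to each half, producing two absolutely convergent pieces that are holomorphic on complementary half-planes whose intersection is $\langle -a, 0\rangle$. For part~(2), I integrate by parts and justify the vanishing of boundary terms via the Sobolev embedding $H^1(\RR)\hookrightarrow C_0(\RR)$ after the change of variables $x=-\log r$.

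For (1), decompose
\begin{align*}
\mathcal{M}(f,c) = \frac{1}{\sqrt{2\pi}}\int_0^1 f(r)\,r^c\,\tfrac{dr}{r} \; + \; \frac{1}{\sqrt{2\pi}}\int_1^\infty f(r)\,r^c\,\tfrac{dr}{r}.
\end{align*}
For the first piece, factor the integrand as $(f(r)r^{-a})\cdot r^{c+a}$ and apply Cauchy--Schwarz against $\tfrac{dr}{r}$:
\begin{align*}
\Bigl|\int_0^1 f(r)\,r^c\,\tfrac{dr}{r}\Bigr| \leq \|f r^{-a}\|_{\mathbb{E}}\Bigl(\int_0^1 r^{2(\alpha+a)-1}\,dr\Bigr)^{1/2},
\end{align*}
where $\alpha=\mathrm{Re}(c)$; the last integral is finite precisely when $\alpha>-a$. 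Analogously, on $[1,\infty)$ Cauchy--Schwarz yields a bound $\|f\|_{\mathbb{E}}\cdot(\int_1^\infty r^{2\alpha-1}\,dr)^{1/2}$, finite for $\alpha<0$. These estimates are locally uniform in the corresponding half-planes. The same reasoning applied after inserting an extra $|\log r|$ factor (absorbed, say, by the trivial bound $|\log r|\le C_\epsilon r^{-\epsilon}+C_\epsilon r^{\epsilon}$) legitimizes differentiation in $c$ under the integral, so each piece is holomorphic on its half-plane and the sum is holomorphic on $\langle -a, 0\rangle$.

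For (2), formal integration by parts gives
\begin{align*}
\mathcal{M}(r\partial_r f,\,c)=\frac{1}{\sqrt{2\pi}}\bigl[f(r)r^{c}\bigr]_0^\infty - c\,\mathcal{M}(f,c),
\end{align*}
so it suffices to verify the boundary term vanishes throughout the closed strip. Under $x=-\log r$ we have $r\partial_r=-\partial_x$ and $\tfrac{dr}{r}=-dx$, so the hypotheses $f, r\partial_r f\in\mathbb{E}$ translate exactly to $\tilde f(x):=f(e^{-x})\in H^1(\RR)$, while the hypotheses $f r^{-a}, (r\partial_r f) r^{-a}\in\mathbb{E}$ (via $r\partial_r(fr^{-a})=(r\partial_r f)r^{-a}-af r^{-a}$) yield $\tilde f(x)e^{ax}\in H^1(\RR)$. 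By $H^1(\RR)\hookrightarrow C_0(\RR)$, both $f(r)$ and $f(r)r^{-a}$ are continuous and tend to $0$ as $r\to 0^+$ and $r\to\infty$. For $\alpha\in[-a,0]$, writing $|f(r)r^{c}|=|f(r)|\cdot r^\alpha$ near $\infty$ (bounded since $\alpha\le 0$) and $|f(r)r^{c}|=|f(r)r^{-a}|\cdot r^{\alpha+a}$ near $0^+$ (bounded since $\alpha+a\ge 0$) shows the boundary term vanishes at both endpoints.

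The main obstacle is precisely this boundary-term justification: the purely $L^2$ hypotheses on $f$ do not supply pointwise decay directly, and one needs the $H^1$ structure which becomes visible only after passing to the logarithmic variable. Once this is in place, both parts reduce to routine applications of Cauchy--Schwarz and integration by parts.
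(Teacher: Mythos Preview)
Your argument is correct, and it takes a genuinely different route from the paper's proof. For part~(1), the paper works entirely through the Fourier identity $\mathcal{M}(f,\alpha+\beta\sqrt{-1})=\sqrt{2\pi}\,\mathcal{F}(f(e^{-x})e^{-\alpha x},\beta)$: it computes $\partial_\beta$ and $\partial_\alpha$ separately (the latter via a difference-quotient and dominated convergence), checks the Cauchy--Riemann relation, and then---somewhat heavily---invokes the elliptic regularity theorem plus Sobolev embedding to upgrade the weak derivatives to a genuine $C^1$ function. Your splitting at $r=1$ with Cauchy--Schwarz is the classical Mellin/Laplace approach and is more elementary; the $|\log r|\le C_\epsilon(r^\epsilon+r^{-\epsilon})$ trick you use for differentiation under the integral is in fact the same inequality the paper uses (their estimate~(4.4)). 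For part~(2), the paper again stays on the Fourier side and uses $\mathcal{F}(\partial_x g,\beta)=\sqrt{-1}\,\beta\,\mathcal{F}(g,\beta)$ for $g\in H^1(\RR)$, which sidesteps boundary terms entirely; you instead integrate by parts in $r$ and then justify the vanishing of $[f(r)r^c]_0^\infty$ via $H^1(\RR)\hookrightarrow C_0(\RR)$ after the logarithmic change of variable. Both are valid; the paper's route avoids any pointwise analysis, while yours is more direct once the Sobolev embedding is granted. What you gain is a shorter, self-contained argument that does not appeal to elliptic regularity; what the paper's approach buys is that all estimates are phrased uniformly in $L^2$, which dovetails with the quantitative bounds used downstream in Theorem~5.3.
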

\begin{proof} We note that
\begin{align}\label{for:58}
  \norm{f\cdot r^{-b}}_{\mathbb{E}}\leq \norm{f\cdot r^{-a}}_{\mathbb{E}}+\norm{f}_{\mathbb{E}}
\end{align}
for any $0\leq b\leq a$.

From \eqref{for:49}, we see that $c\in\CC$ is in the definition strip of $\mathcal{M}(f,\cdot)$ if $c\in\langle\langle -a,0\rangle\rangle$. For any $-a<\alpha<0$ there exists $\epsilon>0$ such that $\langle \alpha-\epsilon,\alpha+\epsilon\rangle\subseteq \langle -a,0\rangle$.  Then for any $n\geq0$ we have
\begin{align}\label{for:54}
 &\int_{\RR}|f(e^{-x})e^{-\alpha x}x^n|^2dx\notag\\
 &=\int_{\RR}|f(r)r^{\alpha }(\log r)^n|^2\frac{1}{r}dr\notag\\
 &\leq C_{n,\epsilon}\int_{\RR}|f(r)r^{\alpha+\epsilon }|^2\frac{1}{r}dr
 +C_{n,\epsilon}\int_{\RR}|f(r)r^{\alpha-\epsilon}|^2\frac{1}{r}dr.
\end{align}
This shows that $f(e^{-x})e^{-\alpha x}\cdot x^n\in L^2(\RR, dx)$. Hence by \eqref{for:49} we have
\begin{align}\label{for:55}
 \frac{\partial^n}{\partial\beta^n}\mathcal{M}(f,\alpha+\beta\sqrt{-1})&=\sqrt{2\pi}\frac{\partial^n}{\partial\beta^n}\mathcal{F}(h(e^{-x})e^{-\alpha x},\beta)\notag\\
 &=(-\sqrt{-1})^n\sqrt{2\pi}\cdot \mathcal{F}(f(e^{-x})e^{-\alpha x}x^n,\beta),
\end{align}
for any $n\geq0$.

On the other hand, let
\begin{align*}
 \Delta_{\alpha,t}(f)(x)=t^{-1}(f(e^{-x})e^{-(\alpha+t) x}-f(e^{-x})e^{-\alpha x}).
\end{align*}
Then we have
\begin{align*}
 \lim_{t\to0}\Delta_{\alpha,t}(f)(x)=-f(e^{-x})e^{-\alpha x}x
\end{align*}
for all $x\in\RR$; and
\begin{align*}
|\Delta_{\alpha,t}(f)|\leq |f(e^{-x})e^{-(\alpha-\epsilon/2)x} x|+|f(e^{-x})e^{-(\alpha+\epsilon/2)x} x|
\end{align*}
if $\abs{t}\leq\epsilon/2$. Similar to \eqref{for:54}, we can show that
\begin{align}\label{for:65}
 |f(e^{-x})e^{-(\alpha-\epsilon/2)} x^n|+|f(e^{-x})e^{-(\alpha+\epsilon/2)} x^n|\in L^2(\RR, dx)
\end{align}
for any $n\geq 0$.

Then by dominated convergence theorem we have
\begin{align*}
 \lim_{t\to0}\Delta_{\alpha,t}(f)=-f(e^{-x})e^{-\alpha x}x \qquad \text{in }L^2(\RR, dx).
\end{align*}
Since $\mathcal{F}$ is isometric by \eqref{for:49} we have
\begin{align*}
 \partial_\alpha\mathcal{M}(f,\alpha+\beta\sqrt{-1})&=\sqrt{2\pi}\lim_{t\to0}\Delta_{\alpha,t}\big(\mathcal{F}(f(e^{-x})e^{-\alpha x},\beta)\big)\\
 &=\sqrt{2\pi}\mathcal{F}\big(\lim_{t\to0}\Delta_{\alpha,t}(f(e^{-x})e^{-\alpha x}),\beta\big)\\
 &=-\sqrt{2\pi}\mathcal{F}(f(e^{-x})e^{-\alpha x}x,\beta).
\end{align*}
Compared with \eqref{for:55}, we see that
\begin{align*}
 \sqrt{-1}\partial_\alpha\mathcal{M}(f,\alpha+\beta\sqrt{-1})=\partial_\beta\mathcal{M}(f,\alpha+\beta\sqrt{-1}).
\end{align*}
If we can show that $\mathcal{M}(f,z)\in C^1(\langle -a,0\rangle)$ (in the sense that $\langle -a,0\rangle$ is viewed as a subset of $\RR^2$), then we finish the proof of \eqref{for:56}.

Substituting  $f$ by $f\cdot\log r$ and repeating the above process we have
\begin{align}\label{for:68}
 \frac{\partial^2}{\partial\alpha^2}\mathcal{M}(f,\alpha+\beta\sqrt{-1})&=-\sqrt{2\pi}\lim_{t\to0}\Delta_{\alpha,t}\big(\mathcal{F}(f(e^{-x})e^{-\alpha x}x,\beta)\big)\notag\\
 &\overset{\text{(1)}}{=}-\sqrt{2\pi}\mathcal{F}\big(\lim_{t\to0}\Delta_{\alpha,t}(f(e^{-x})e^{-\alpha x}x),\beta\big)\notag\\
 &=\sqrt{2\pi}\mathcal{F}(f(e^{-x})e^{-\alpha x}x^2,\beta).
\end{align}
Here in $(1)$ we use that
\begin{align*}
 \lim_{t\to0}\Delta_{\alpha,t}(f\cdot\log r)=f(e^{-x})e^{-\alpha x}x^2 \qquad \text{in }L^2(\RR, dx).
\end{align*}
Moreover, for $0\leq j\leq 2$ we have
\begin{align}\label{for:69}
  &\int_{c=\alpha-\frac{\epsilon}{2}}^{\alpha+\frac{\epsilon}{2}}\int_{\beta=-\infty}^\infty|\mathcal{F}(f(e^{-x})e^{-c x}x^j,\beta)|^2 d\beta dc\notag\\
  &=\int_{c=\alpha-\frac{\epsilon}{2}}^{\alpha+\frac{\epsilon}{2}}\int_{r=0}^\infty|f(r)r^{c}(\log r)^j|^2 \frac{1}{r}dr dc\notag\\
  &\overset{\text{(1)}}{\leq} C_\epsilon\int_{c=\alpha-\frac{\epsilon}{2}}^{\alpha+\frac{\epsilon}{2}}\big(\norm{f\cdot r^{c+\frac{\epsilon}{2}}}_{\mathbb{E}}^2+\norm{f\cdot r^{c-\frac{\epsilon}{2}}}_{\mathbb{E}}^2\big) dc\notag\\
  &\overset{\text{(2)}}{\leq} C_\epsilon\int_{c=\alpha-\frac{\epsilon}{2}}^{\alpha+\frac{\epsilon}{2}}(\norm{f\cdot r^{-a}}^2_{\mathbb{E}}+\norm{f}^2_{\mathbb{E}})dc\notag\\
  &\leq C_\epsilon(\norm{f\cdot r^{-a}}^2_{\mathbb{E}}+\norm{f}^2_{\mathbb{E}}).
\end{align}
Here in $(1)$ we use \eqref{for:54}; and in $(2)$ we use \eqref{for:58}.

By \eqref{for:55} and \eqref{for:68}, it follows from Elliptic regularity theorem that $\mathcal{M}(f,z)\in W^2(\langle \alpha-\epsilon,\alpha+\epsilon\rangle)$ (see Section \ref{sec:17}); which implies that
$\mathcal{M}(f,z)\in C^1(\langle \alpha-\epsilon,\alpha+\epsilon\rangle)$
by Sobolev embedding theorem. Hence we see that $\mathcal{M}(f,z)\in C^1(\langle -a,0\rangle)$.

\medskip
\eqref{for:57}: If $r\partial_rf\in \mathbb{E}$ and $r\partial_rf\cdot r^{-a}\in \mathbb{E}$, then by \eqref{for:49} for $c=\alpha+\beta\sqrt{-1}\in \langle\langle -a,0\rangle\rangle$ we have
\begin{align*}
 \mathcal{M}(r\partial_rf,c)&\overset{\text{(1)}}{=}\sqrt{2\pi}\mathcal{F}(\partial_rf(e^{-x})\cdot e^{-(\alpha+1) x},\beta)\\
 &=\sqrt{2\pi}\mathcal{F}\Big(-\partial_x\big(f(e^{-x})\cdot e^{-\alpha x}\big)-\alpha f(e^{-x})\cdot e^{-\alpha x},\beta\Big)\\
 &= -(\alpha+\beta\sqrt{-1})\sqrt{2\pi}\mathcal{F}(f(e^{-x})\cdot e^{-\alpha x},\beta)\\
 &= -c\mathcal{M}(f,c).
\end{align*}
Here in $(1)$ we used the fact that $r\partial_rf\cdot r^{-b}\in \mathbb{E}$ for any $0\leq b\leq a$, see \eqref{for:58}.  Hence we get \eqref{for:57}.
\end{proof}
Following exactly the same proof line we can show that
\begin{corollary}\label{cor:2}
If $f\in \mathbb{E}$ and $f\cdot r^{a}\in \mathbb{E}$, $a>0$, then
\begin{enumerate}
  \item \label{for:93} $\mathcal{M}(f,c)$ is analytic on $\langle 0,a\rangle$;

  \medskip
  \item\label{for:94} if $r\partial_rf\in \mathbb{E}$ and $r\partial_rf\cdot r^{a}\in \mathbb{E}$, then
  \begin{align*}
   \mathcal{M}(r\partial_rf,c)=-c\mathcal{M}(f,c)
  \end{align*}
  if $c\in \langle\langle 0,a\rangle\rangle$.

\end{enumerate}

\end{corollary}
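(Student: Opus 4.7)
The plan is to mirror the proof of Lemma \ref{le:14} line by line, swapping the role of $r^{-a}$ with $r^{a}$ so that the relevant strip of analyticity shifts from $\langle -a,0\rangle$ to $\langle 0,a\rangle$. The structural reason this works is that the fundamental identity \eqref{for:49} expresses $\mathcal{M}(f,\alpha+\beta\sqrt{-1})$ as the Fourier transform of $f(e^{-x})e^{-\alpha x}$ in $\beta$; under $r=e^{-x}$, the weight $r^{a}$ becomes $e^{-ax}$, which controls integrability for the positive range $\alpha\in[0,a]$ exactly the way $r^{-a}$ controlled it for $\alpha\in[-a,0]$.

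For part \eqref{for:93}, I would first record the interpolation bound $\norm{f\cdot r^{b}}_{\mathbb{E}}\leq \norm{f\cdot r^{a}}_{\mathbb{E}}+\norm{f}_{\mathbb{E}}$ for $0\leq b\leq a$ (which follows from $r^{2b}\leq 1+r^{2a}$, splitting the integration into $r\leq 1$ and $r\geq 1$). This guarantees $f(e^{-x})e^{-\alpha x}\in L^2(\RR,dx)$ for every $\alpha\in[0,a]$, so $\mathcal{M}(f,c)$ is defined on $\langle\langle 0,a\rangle\rangle$. Then, fixing $0<\alpha<a$ and choosing $\epsilon>0$ with $\langle\alpha-\epsilon,\alpha+\epsilon\rangle\subseteq\langle 0,a\rangle$, I would run the same $L^2$-difference-quotient/dominated convergence argument used in \eqref{for:54}--\eqref{for:68} to show that $\partial_{\beta}^{n}\mathcal{M}(f,\alpha+\beta\sqrt{-1})$ and $\partial_{\alpha}^{j}\mathcal{M}(f,\alpha+\beta\sqrt{-1})$ exist for $n\leq 2$, $j\leq 2$ and are given by Fourier transforms of $f(e^{-x})e^{-\alpha x}x^{n}$. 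These derivatives satisfy the Cauchy-Riemann relation $\sqrt{-1}\partial_\alpha\mathcal{M}=\partial_\beta\mathcal{M}$ verbatim as in the lemma. The estimate \eqref{for:69} carries over, since the only fact used is the interpolation bound above. Applying Theorem \ref{th:4} then elliptic regularity gives $\mathcal{M}(f,z)\in W^{2}(\langle\alpha-\epsilon,\alpha+\epsilon\rangle)$, and Sobolev embedding upgrades this to $C^{1}$, yielding analyticity on $\langle 0,a\rangle$.

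For part \eqref{for:94}, I would repeat the calculation at the end of the lemma's proof. Assuming $r\partial_r f$ and $r\partial_r f\cdot r^{a}$ both lie in $\mathbb{E}$, the interpolation bound now applies to $r\partial_r f$ for the range $0\leq b\leq a$, so $(r\partial_r f)(e^{-x})e^{-\alpha x}\in L^{2}(\RR,dx)$ for $\alpha\in[0,a]$. Writing $(r\partial_r f)(e^{-x})=-\partial_x\bigl(f(e^{-x})\bigr)$ and then $-\partial_x\bigl(f(e^{-x})\bigr)e^{-\alpha x}=-\partial_x\bigl(f(e^{-x})e^{-\alpha x}\bigr)-\alpha f(e^{-x})e^{-\alpha x}$, the Fourier-transform-of-a-derivative identity produces the clean formula $\mathcal{M}(r\partial_r f,c)=-c\,\mathcal{M}(f,c)$ for $c\in\langle\langle 0,a\rangle\rangle$, as claimed.

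The argument has no genuine obstacle; the only thing to watch is the direction of the interpolation inequality, which must now exploit $r^{2b}\leq 1+r^{2a}$ (the reverse of what was needed for $r^{-a}$). Once this step is in place, every subsequent estimate is literally identical to those in Lemma \ref{le:14}, and both conclusions follow.
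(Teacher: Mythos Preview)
Your proposal is correct and is exactly the approach the paper takes: the paper's own proof of Corollary~\ref{cor:2} consists of the single sentence ``Following exactly the same proof line we can show that,'' referring back to Lemma~\ref{le:14}. Your write-up simply makes that sentence explicit, including the one new ingredient you correctly flag---the reversed interpolation inequality $r^{2b}\leq 1+r^{2a}$ for $0\leq b\leq a$---after which every step is verbatim from the lemma.
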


\section{Twisted coboundary for the PH  flow of $\mathbb{G}$}
The goal of the paper is to study the twisted equation
\begin{align}\label{for:39}
  (X+m)f=g
\end{align}
in a unitary representation of $\mathbb{G}$. The basic idea is as follows:
 \begin{enumerate}
   \item We show that the solution to equation \eqref{for:39} unique in any unitary representation of $\mathbb{G}$, see Lemma \ref{le:12}. This allows us to  study equation \eqref{for:39} in various subgroups of
    $\mathbb{G}$; and thus obtain bounded derivatives in these subgroups. We will choose sufficiently many such subgroups that their Lie algebras span the whole tangent space, therefore the global Sobolev estimates follow from the elliptic regularity theorem, see Theorem \ref{th:4}.

    \smallskip
   \item By unitary representation of abelian groups we obtain bounded derivatives in $\mathfrak{g}^\phi$ with $\phi=0$ as well as in $\mathfrak{D}$ (\eqref{for:30}), see Lemma \ref{le:12}.

   \smallskip
   \item The remaining directions are in $\mathfrak{g}^\phi$ with $\phi\neq0$. Then we consider subgroups in $\mathbb{G}$ with Lie algebras spanned by $\{X, v\}$, $v\in\mathfrak{g}^\phi$ with $\phi\neq0$; or
   $\{X, v,u\}$, $v\in\mathfrak{g}^\phi$ and $u\in\mathfrak{g}^\psi$ with $\phi>0$ and $\psi>0$. Theses subgroups are isomorphic to $\RR\ltimes\RR$ or $\RR\ltimes\RR^2$. Recall the following direct consequence of the well known Howe-Moore theorem
on vanishing of the matrix coefficients at infinity \cite{howe-moore}: if $(\pi,\mathcal{H})$ denotes a unitary representation of $\mathbb{G}$ such that the restriction of $\pi$ to each simple factor has no non-trivial fixed vectors, then $\pi$ has
no $M$-invariant vector for any closed non-compact subgroup $M$ of $\mathbb{G}$. By Howe-Moore, we only need to consider unitary representations of $\RR\ltimes\RR$ or $\RR\ltimes\RR^2$ computed in Section \ref{sec:10}. We carry out explicit computation in these unitary representations to obtain upperbounds of derivatives in these subgroups, see  Theorem \ref{th:2} and Proposition \ref{po:1}.
 \end{enumerate}

\subsection{Twisted coboundary for a flow in any Lie groups} In this part we present several technical results which are important for the subsequent discussion.

\begin{lemma}\label{le:12}
Suppose $G$ is a Lie group and $(\pi,\mathcal{H})$ is a unitary representation of $G$. Also suppose $0\neq\mathfrak{u}\in \text{Lie}(G)$ such that the one-parameter subgroup $\{\exp(t\mathfrak{u})\}_{t\in\RR}$
is isomorphic to $\RR$. Then:
\begin{enumerate}
  \item\label{for:89} for any $g\in \mathcal{H}$ and any
$s\in\RR\backslash 0$, the twisted equation
\begin{align*}
 (\mathfrak{u}+s)f=g
\end{align*}
has a unique solution $f\in \mathcal{H}$ with
\begin{align*}
  \norm{f}\leq \abs{s}^{-1}\norm{g};
\end{align*}

\smallskip
  \item\label{for:90} if $Y\in \text{Lie}(G)$ with $[Y,\mathfrak{u}]=0$ and $Y^ng\in \mathcal{H}$, $n\in\NN$, then $Y^nf\in \mathcal{H}$ with
  \begin{align*}
  \norm{Y^nf}\leq \abs{s}^{-1}\norm{Y^ng}.
\end{align*}

\end{enumerate}

\end{lemma}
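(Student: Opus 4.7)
The plan is to derive both parts from the spectral theorem applied to the skew-adjoint infinitesimal generator $d\pi(\mathfrak{u})$. Since $\{\exp(t\mathfrak{u})\}_{t\in\RR}\cong\RR$ and $\pi$ restricted to this subgroup is strongly continuous and unitary, Stone's theorem furnishes a projection-valued measure $E$ on $\RR$ with
\begin{equation*}
\pi(\exp(t\mathfrak{u}))=\int_\RR e^{it\lambda}\,dE(\lambda),\qquad d\pi(\mathfrak{u})=\int_\RR i\lambda\,dE(\lambda).
\end{equation*}
In this functional calculus, $(\mathfrak{u}+s)$ is represented by multiplication by $i\lambda+s$, which satisfies $|i\lambda+s|^2=\lambda^2+s^2\geq s^2$ for every real $s\neq 0$ and every $\lambda\in\RR$. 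Consequently the bounded operator
\begin{equation*}
T:=(\mathfrak{u}+s)^{-1}=\int_\RR (i\lambda+s)^{-1}\,dE(\lambda)
\end{equation*}
satisfies $\|T\|\leq |s|^{-1}$ and maps $\mathcal{H}$ bijectively onto the domain of $\mathfrak{u}$. Part \eqref{for:89} then follows by setting $f:=Tg$: existence, uniqueness, and the bound $\|f\|\leq |s|^{-1}\|g\|$ are immediate.

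For part \eqref{for:90}, the commutation relation $[Y,\mathfrak{u}]=0$ means that $Y$ and $\mathfrak{u}$ generate an abelian Lie subalgebra, so the one-parameter subgroups $\exp(tY)$ and $\exp(t'\mathfrak{u})$ commute inside $G$. Hence the unitary operators $\pi(\exp(tY))$ commute with every $\pi(\exp(t'\mathfrak{u}))$, and therefore with all spectral projections $E(B)$ and with $T$ itself. In particular, for every $t\in\RR$ we have $\pi(\exp(tY))\,Tg = T\,\pi(\exp(tY))\,g$. If $g\in D(d\pi(Y)^n)$, i.e.\ the $\mathcal{H}$-valued map $t\mapsto \pi(\exp(tY))g$ is of class $C^n$ at $0$, then applying $T$ (a bounded operator) preserves this differentiability; thus $Tg\in D(d\pi(Y)^n)$ and $Y^n(Tg)=T(Y^n g)$. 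Applied to $f=Tg$ this yields $Y^n f = T Y^n g$ and hence the desired estimate $\|Y^n f\|\leq |s|^{-1}\|Y^n g\|$.

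The main subtlety is the domain question in step two: one must ensure that $Tg$ actually lies in $D(Y^n)$, rather than merely having a formal identity. This is the only place where care is needed, and it is handled cleanly by working at the group level: since each finite-difference approximant $h^{-n}\sum_{k=0}^n(-1)^{n-k}\binom{n}{k}\pi(\exp(khY))$ commutes with $T$, one can apply the approximant to $Tg$, use the commutation to move $T$ to the outside, and pass to the limit $h\to 0$ using the assumption $Y^n g\in\mathcal{H}$. Everything else reduces to the elementary lower bound $|i\lambda+s|\geq|s|$, which is where the hypothesis $s\neq 0$ is used.
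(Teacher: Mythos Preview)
Your proof is correct and follows essentially the same approach as the paper: both arguments reduce part~\eqref{for:89} to the elementary lower bound $|i\lambda+s|\geq|s|$ via spectral theory for the one-parameter group $\{\exp(t\mathfrak{u})\}$ (the paper phrases this as a direct integral over $\widehat{\RR}$, you phrase it as Stone's theorem plus the Borel functional calculus --- these are the same thing).

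For part~\eqref{for:90} there is a minor technical difference worth noting. The paper performs a joint Fourier decomposition over the two-dimensional abelian group generated by $\exp(tY)$ and $\exp(t\mathfrak{u})$, splitting into cases according to whether that group is $\RR^2$ or $\RR\times S^1$, and then reads off the estimate componentwise. Your route is slightly more economical: you keep the one-dimensional spectral resolution of $\mathfrak{u}$, observe that the bounded resolvent $T=(\mathfrak{u}+s)^{-1}$ commutes with the unitaries $\pi(\exp(tY))$, and pass to the limit in the difference quotients. This avoids the case distinction and the need to invoke direct integral theory for the two-parameter group; the paper's version, on the other hand, makes the joint spectral picture completely explicit. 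Either way the content is the same commutation of $T$ with the $Y$-flow, and both arguments are complete.
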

\begin{proof}
\eqref{for:89}: For the one-parameter subgroup $\{\exp(t\mathfrak{u})\}_{t\in\RR}$ we have a direct integral decomposition
\begin{align*}
    \pi\mid_{\exp(t\mathfrak{u})}=\int_{\widehat{\RR}}\chi(t)du(\chi)
\end{align*}
where $u$ is a regular Borel measure and
\begin{align*}
    v=\int_{\widehat{\RR}}v_{\chi}du(\chi),\qquad \forall\, v\in \mathcal{H}.
\end{align*}
Set
\begin{align*}
  f_{\chi}=(s+\chi'(0))^{-1}g_{\chi},\qquad \chi\in \widehat{\RR}.
\end{align*}
We see that $f=\int_{\widehat{\RR}}(s+\chi'(0))^{-1}g_{\chi}du(\chi)$ is a formal solution of the equation $(\mathfrak{u}+s)f=g$.

Next, we will show that $f\in \mathcal{H}$.  Since $\chi'(0)\in i\RR$,
\begin{align}\label{for:96}
 |s+\chi'(0)|\geq \abs{s},\qquad \forall\,\chi\in \widehat{\RR}.
\end{align}
Then
\begin{align*}
 \norm{f}^2=\int_{\widehat{\RR}}\abs{s+\chi'(0)}^{-2}\norm{g_{\chi}}^2du(\chi)\leq \abs{s}^{-2}\int_{\widehat{\RR}}\norm{g_{\chi}}^2du(\chi)=\abs{s}^{-2}\norm{g}^2.
\end{align*}
This shows that $f\in \mathcal{H}$.

On the other hand, if $(\mathfrak{u}+s)f=0$ with $f\in \mathcal{H}$, then we have
\begin{align*}
  (s+\chi'(0))f_{\chi}=0
\end{align*}
for almost every $\chi\in \widehat{\RR}$ with respect to $u$. Then from \eqref{for:96} we see that $f_{\chi}=0$ for almost every $\chi\in \widehat{\RR}$. This means that $f=0$. Hence we showed the uniqueness of the solution of the twisted equation. This completes the proof.

\medskip
\eqref{for:90} We consider the connected subgroup $S=\{\exp(tY+r\mathfrak{u})\}_{t,r\in\RR}$. Since one-parameter subgroup $\{\exp(tY)\}_{t\in\RR}$ is either isomorphic to $\RR$ or isomorphic to $S^1$, $S$ is
either isomorphic to $\RR^2$ or isomorphic to $\RR\times S^1$. Then we have a direct integral decomposition
\begin{align*}
    \pi\mid_{\exp(t\mathfrak{u}+rY)}=\int_{\widehat{\RR^2}}\chi(t)\eta(r)du(\chi,\eta)
\end{align*}
if $S=\RR^2$; or
\begin{align*}
    \pi(\exp(t\mathfrak{u}),k)=\int_{\widehat{\RR\times S^1}}\chi(t)\eta(k)du(\chi,\eta),\qquad k\in S^1
\end{align*}
if $S=\RR\times S^1$, where $u$ is a regular Borel measure.

 We can write
\begin{align*}
    v=\int_{\widehat{\RR^2}}v_{\chi,\eta}du(\chi,\eta)\quad\text{or}\quad v=\int_{\widehat{\RR\times S^1}}v_{\chi,\eta}du(\chi,\eta),\qquad \forall\, v\in \mathcal{H}.
\end{align*}
Set
\begin{align*}
  f_{\chi,\eta}=(s+\chi'(0))^{-1}g_{\chi,\eta}.
\end{align*}
It is clear that
\begin{align*}
 \eta'(0)^\ell f_{\chi,\eta}=\eta'(0)^\ell(s+\chi'(0))^{-1} g_{\chi,\eta}, \qquad (\chi,\eta)\in \widehat{\RR^2}
\end{align*}
or
\begin{align*}
 (m\sqrt{-1})^\ell f_{\chi,\eta}=(m\sqrt{-1})^\ell(s+\chi'(0))^{-1} g_{\chi,\eta}, \qquad (\chi,\eta)\in \widehat{\RR\times S^1}
\end{align*}
where $\eta(e^{\theta\sqrt{-1}})=e^{m\theta\sqrt{-1}}$, $m\in\ZZ$, $0\leq \theta<2\pi$ for any $0\leq \ell\leq n$.

This implies that
\begin{align*}
 \norm{Y^\ell f}^2=\int_{\widehat{\RR}}\abs{s+\chi'(0)}^{-2}\norm{Y^\ell g_{\chi,\eta}}^2du(\chi,\eta)\leq \abs{s}^{-2}\norm{Y^\ell g}^2.
\end{align*}
for any $0\leq \ell\leq n$. Hence we finish the proof.
\end{proof}

\subsection{Twisted coboundary for the HP flow in irreducible component of $\RR\ltimes\RR$ and $\RR\ltimes\RR^2$} We assume notations in Section \ref{sec:10}.

\begin{lemma}\label{for:40}
Suppose $\lambda_1>0$ and $s>0$. In any irreducible representation $(\beta^\delta,\,\mathbb{E}^\delta)$ of $G$, if $g\in (\mathbb{E}^\delta)^s$, then
for any $\frac{\epsilon}{\lambda_1}\leq a\leq s-\frac{\epsilon}{\lambda_1}$, where $\epsilon$ is sufficiently small we have
 \begin{align}\label{for:42}
|\mathcal{M}(g,-a\lambda_1+t\sqrt{-1})|\leq C_{\epsilon}\norm{g}_{a+\frac{\epsilon}{\lambda_1}},\qquad \forall\,t\in\RR,
\end{align}
 Hence the linear functional
\begin{align*}
 \mathcal{D}_{\delta,m}(h)=\mathcal{M}(h,-m)
\end{align*}
is defined for any $h\in (\mathbb{E}^\delta)^\ell$, if $\ell>\frac{m}{\lambda_1}$ satisfying
\begin{align}\label{for:45}
|\mathcal{D}_{\delta,m}(h)|\leq C_{\epsilon, \lambda_1}\norm{h}_{\frac{m+\epsilon}{\lambda_1}}
\end{align}
where $\frac{\epsilon}{\lambda_1}\leq\frac{m}{\lambda_1}\leq \ell-\frac{\epsilon}{\lambda_1}$; moreover, $\mathcal{D}_{\delta,m}(h)$ is an $(X-m)$-invariant distribution.
\end{lemma}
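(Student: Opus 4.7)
The plan is to reduce the pointwise bound on $\mathcal{M}(g, -a\lambda_1 + t\sqrt{-1})$ to a weighted $L^1$ estimate via the Mellin--Fourier identification \eqref{for:49}, and then to turn that into a Sobolev bound using the explicit form of $u_1$ recorded in \eqref{for:41}; the invariance will then fall out of Lemma \ref{le:14}\eqref{for:57}.

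First, combining \eqref{for:49} with the trivial bound $|\mathcal{F}(f)(\beta)| \leq (2\pi)^{-1/2}\norm{f}_{L^1}$ gives, uniformly in $t$,
\begin{align*}
|\mathcal{M}(g, -a\lambda_1 + t\sqrt{-1})|
\;\leq\; \norm{g(e^{-x})\,e^{a\lambda_1 x}}_{L^1(\RR,dx)}
\;=\; \int_0^\infty |g(r)|\,r^{-a\lambda_1}\,\tfrac{dr}{r},
\end{align*}
the last identity coming from the substitution $r = e^{-x}$. I would then split this integral at $r = 1$ and apply Cauchy--Schwarz on the two pieces using the weights $r^{\epsilon/2}$ on $(0,1)$ and $r^{-\epsilon/2}$ on $(1,\infty)$; since $\int_0^1 r^{\epsilon-1}dr = \int_1^\infty r^{-\epsilon-1}dr = 1/\epsilon$, this yields
\begin{align*}
\int_0^\infty |g(r)|\,r^{-a\lambda_1}\,\tfrac{dr}{r}
\;\leq\; \tfrac{1}{\sqrt{\epsilon}}\bigl(\norm{g\cdot r^{-a\lambda_1 - \epsilon/2}}_{\mathbb{E}} + \norm{g\cdot r^{-a\lambda_1 + \epsilon/2}}_{\mathbb{E}}\bigr).
\end{align*}
The interpolation inequality \eqref{for:58} (applicable because the lower hypothesis $a \geq \epsilon/\lambda_1$ gives $a\lambda_1 - \epsilon/2 \geq 0$) bounds the second summand by the first plus $\norm{g}_{\mathbb{E}}$. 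Finally, \eqref{for:41} identifies $\norm{g\cdot r^{-a\lambda_1 - \epsilon/2}}_{\mathbb{E}}$ with $\norm{u_1^{a+\epsilon/(2\lambda_1)} g}$, which is dominated by $\norm{g}_{a+\epsilon/\lambda_1}$ after a harmless relabeling of $\epsilon$; the upper hypothesis $a \leq s - \epsilon/\lambda_1$ keeps this Sobolev norm finite. This proves \eqref{for:42}.

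The bound \eqref{for:45} is then immediate by specializing \eqref{for:42} to $a = m/\lambda_1$ and $t = 0$: the hypothesis $\ell > m/\lambda_1$ together with the freedom to shrink $\epsilon$ places $m/\lambda_1$ in the admissible range, so $\mathcal{D}_{\delta,m}$ extends to a bounded linear functional on $(\mathbb{E}^\delta)^\ell$. For the $(X-m)$-invariance I would use Lemma \ref{le:14}\eqref{for:57} combined with $X = -r\partial_r$ from \eqref{for:41} to get $\mathcal{M}(Xh, c) = c\,\mathcal{M}(h, c)$, so $\mathcal{D}_{\delta,m}((X+m)h) = \mathcal{M}((X+m)h, -m) = 0$ for sufficiently smooth $h$; since $X$ is skew-adjoint, the adjoint of $X+m$ with respect to the distributional pairing is $-(X-m)$, which is precisely the statement $(X-m)\mathcal{D}_{\delta,m} = 0$.

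The only delicate step is the weighted Cauchy--Schwarz: one needs weights that make each truncated integral converge while keeping both shifted exponents $-a\lambda_1 \pm \epsilon/2$ in the range where \eqref{for:58} together with the identification $u_1^k \leftrightarrow r^{-k\lambda_1}$ convert $\mathbb{E}$-norms into Sobolev norms of $g$. The two-sided constraint $\epsilon/\lambda_1 \leq a \leq s - \epsilon/\lambda_1$ in the statement is precisely what makes this bookkeeping work; once the weights are chosen, the remainder of the argument is a direct application of the tools assembled in Section \ref{sec:10}.
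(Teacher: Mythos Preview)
Your argument is correct, and the invariance step is handled the same way as in the paper. The difference lies in how you obtain the pointwise bound \eqref{for:42}. The paper does not pass through an $L^1$ estimate; instead it uses \eqref{for:55} and \eqref{for:54} to bound both
\[
\int_\RR \bigl|\mathcal{M}(g,-a\lambda_1+t\sqrt{-1})\bigr|^2\,dt
\quad\text{and}\quad
\int_\RR \bigl|\partial_t\mathcal{M}(g,-a\lambda_1+t\sqrt{-1})\bigr|^2\,dt
\]
by $C_\epsilon\norm{g}_{a+\epsilon/\lambda_1}$, and then invokes the one-dimensional Sobolev embedding $W^1(\RR)\hookrightarrow L^\infty(\RR)$ along the vertical line. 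Your route---bounding the Fourier transform pointwise by the $L^1$ norm of its preimage and then recovering weighted $\mathbb{E}$-norms via Cauchy--Schwarz---is more self-contained, since it avoids differentiating in $t$ and does not need the auxiliary identities \eqref{for:54}--\eqref{for:55} established inside Lemma~\ref{le:14}. The paper's approach, on the other hand, reuses machinery it has already set up and yields slightly more (an $H^1$ bound on each vertical line, not just $L^\infty$). Either way one lands on the same inequality $\norm{g\cdot r^{-a\lambda_1\pm\epsilon/2}}_{\mathbb{E}}\lesssim \norm{g}_{a+\epsilon/\lambda_1}$, so the endgame is identical.
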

\begin{proof}
Since $g\in (\mathbb{E}^\delta)^s$, by \eqref{for:41} we see that $g\cdot r^{-s\lambda_1}\in \mathbb{E}^\delta$. By \eqref{for:55} and \eqref{for:54}, for $0\leq n\leq 1$ we have
\begin{align}\label{for:74}
&\int_\RR |\frac{\partial^n}{\partial t^n}\mathcal{M}(g,-a\lambda_1+t\sqrt{-1})|^2dt\leq C_\epsilon\norm{g}_{a+\frac{\epsilon}{\lambda_1}}.
\end{align}
Hence \eqref{for:42}  follows from Sobolev imbedding theorem. It is clear that \eqref{for:45} is a direct consequence of  \eqref{for:42}.

If $g=(x+m)f$, where $f\in (\mathbb{E}^\delta)^\infty$  then it follows from Lemma \ref{le:14} that
\begin{align*}
 \mathcal{M}(g,-m)=\mathcal{M}\big((-r\partial_rf+m)f,-m\big)=(-m+m) \mathcal{M}(f,-m)=0.
\end{align*}
This shows that $\mathcal{D}_{\delta,m}(h)$ is $(X-m)$-invariant. Hence we finish the proof.
\end{proof}
The next theorem a crucial step in proving Theorem \ref{th:6}. \eqref{for:53} is the most difficult part of the proof. The scheme of the proof of \eqref{for:53} is as follows:
\begin{enumerate}
  \item By Mellin transform we construct the formal solution \eqref{for:77} on the strip $\langle\langle -s\lambda_1,0\rangle\rangle$ and obtain $L^2$ norm of the formal along each vertical line in the strip, see \emph{Step I};

  \smallskip
  \item by using  Mellin inversion theorem, in \emph{Step II} we show that the formal solution corresponds to a solution $f$ with $(I-u_1^2)^\frac{t}{2}f\in \mathbb{E}^\delta$, $0<t<s$;

  \smallskip
  \item since $\norm{(I-u_1^2)^\frac{t}{2}f}$ are uniformly bounded near $t=0$ and near $t=s$, we can show that $(I-u_1^2)^\frac{t}{2}f\in \mathbb{E}^\delta$ for both $t=0$ and $t=s$, see \emph{Step III}.
\end{enumerate}

\begin{theorem}\label{th:2}
Suppose $s>0$. In any irreducible representation $(\beta^\delta,\,\mathbb{E}^\delta)$ of $G$, if $g\in (\mathbb{E}^\delta)^s$, then
\begin{enumerate}

  \item\label{for:52}if $\lambda_1>0$ and  if $s>\frac{m}{\lambda_1}$ and the equation
\eqref{for:39} has a solution $f\in \mathbb{E}^\delta$ with $(I-u_1^2)^{\text{\tiny$\frac{m}{2\lambda_1}$}} f\in \mathbb{E}^\delta$ then $\mathcal{D}_{\delta,m}(g)=0$;

\medskip
  \item\label{for:53} if $\lambda_1>0$, $s>\frac{m}{\lambda_1}$ and $\mathcal{D}_{\delta,m}(g)=0$, the equation
\eqref{for:39} has a solution $f\in \mathbb{E}^\delta$ with estimates
\begin{align*}
 \norm{(I-u_1^2)^\frac{t}{2}f}&\leq \left\{\begin{aligned} &C_\epsilon\norm{g}_{t},&\qquad &\text{if }|t\lambda_1-m|\geq \epsilon/2,\\
&C_{\epsilon}\norm{g}_{\frac{m+\epsilon}{\lambda_1}},&\qquad &\text{if }|t\lambda_1-m|<\epsilon/2,
\end{aligned}
 \right.
 \end{align*}
 for any $0\leq t\leq s$, where $\epsilon<\min\{\frac{m}{2},\frac{1}{2},\frac{s\lambda_1-m}{2}\}$;

\medskip
\item\label{for:76} if $\lambda_1>0$ and $s\leq \frac{m}{\lambda_1}$ the equation
\eqref{for:39} has a solution $f\in \mathbb{E}^\delta$ with $u_1^tf\in \mathbb{E}^\delta$, for any $0\leq t<s$ satisfying
\begin{align*}
  \norm{(I-u_1^2)^\frac{t}{2}f}\leq C(m-t\lambda_1)^{-1}\norm{g}_t;
\end{align*}

\medskip
\item \label{for:92}if $\lambda_1<0$ the equation
\eqref{for:39} has a solution $f\in \mathbb{E}^\delta$ with $(I-u_1^2)^\frac{s}{2}f\in \mathbb{E}^\delta$ satisfying
\begin{align*}
  \norm{(I-u_1^2)^\frac{t}{2}f}\leq C(m-t\lambda_1)^{-1}\norm{g}_t,\qquad 0\leq t\leq s.
\end{align*}
\end{enumerate}
\end{theorem}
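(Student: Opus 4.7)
The plan is Mellin transform. Since $X=-r\partial_r$ in the representation, Lemma \ref{le:14}(\ref{for:57}) converts the twisted equation $(X+m)f=g$ into the scalar identity $(c+m)\mathcal{M}(f,c)=\mathcal{M}(g,c)$ on any vertical line where both transforms are defined, so the formal solution is $\widehat{f}(c):=\mathcal{M}(g,c)/(c+m)$, singular only at the single real point $c=-m$. Moreover, since $u_1$ acts as multiplication by $(-1)^\delta r^{-\lambda_1}\sqrt{-1}$, the operator $(I-u_1^2)^{t/2}$ is pointwise multiplication by $(1+r^{-2\lambda_1})^{t/2}$, and by \eqref{eq:mellin_unitary} the condition $(I-u_1^2)^{t/2}f\in\mathbb{E}^\delta$ is equivalent (when $\lambda_1>0$) to an $L^2$ bound for $\mathcal{M}(f,\cdot)$ on the vertical line $\text{Re}(c)=-t\lambda_1$. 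Every assertion of the theorem thus reduces to controlling $\widehat{f}$ on appropriate shifted lines.

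For \eqref{for:52}, the hypothesis $(I-u_1^2)^{m/(2\lambda_1)}f\in\mathbb{E}^\delta$ combined with the pointwise inequality $(1+r^{-2\lambda_1})^{m/(2\lambda_1)}\geq r^{-m}$ and \eqref{for:58} forces $f\cdot r^{-a}\in\mathbb{E}^\delta$ for every $0\leq a\leq m$. Hence Lemma \ref{le:14}(\ref{for:56}) makes $\mathcal{M}(f,c)$ analytic on $\langle -m,0\rangle$ and continuous up to the closed strip; since $s>m/\lambda_1$ simultaneously extends $\mathcal{M}(g,c)$ analytically through $c=-m$ by Lemma \ref{for:40}, letting $c\to -m$ in $(c+m)\mathcal{M}(f,c)=\mathcal{M}(g,c)$ forces $\mathcal{D}_{\delta,m}(g)=\mathcal{M}(g,-m)=0$.

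For \eqref{for:53} I follow the three-step scheme announced in the statement. \emph{Step I:} Set $\widehat{f}(c):=\mathcal{M}(g,c)/(c+m)$ on the punctured strip $\langle\langle -s\lambda_1,0\rangle\rangle\setminus\{-m\}$. On a line $\text{Re}(c)=-t\lambda_1$ with $|t\lambda_1-m|\geq\epsilon/2$, the factor $|c+m|^{-1}$ is uniformly bounded and Lemma \ref{for:40} together with the line-wise $L^2$ form of \eqref{eq:mellin_unitary} delivers the first half of the asserted bound. On lines with $|t\lambda_1-m|<\epsilon/2$, the vanishing $\mathcal{M}(g,-m)=0$ licenses the Taylor expansion $\mathcal{M}(g,c)=(c+m)(\partial_c\mathcal{M}(g))(-m)+O((c+m)^2)$, whose derivative is controlled uniformly across a neighborhood of the pole by \eqref{for:55}, yielding the second half. \emph{Step II:} Applying \eqref{for:63} on any vertical line in the punctured strip produces a single function $f$ on $\RR^+$; line independence follows from analyticity of $\widehat{f}$ off the pole and from $|\text{Im}(c)|$-decay coming from the Sobolev regularity of $g$ via \eqref{for:55}. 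Then $(X+m)f=g$ because both sides share Mellin transform $\mathcal{M}(g,c)$, while Step I gives $(I-u_1^2)^{t/2}f\in\mathbb{E}^\delta$ for every $t\in(0,s)$ with $t\lambda_1\neq m$. \emph{Step III:} The Step I bounds are uniform as $t\downarrow 0$ and as $t\uparrow s$, so Fatou's lemma (or weak-$*$ compactness in $\mathbb{E}^\delta$) upgrades the conclusion to the endpoints $t=0$ and $t=s$.

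For \eqref{for:76}, since $s\leq m/\lambda_1$ the pole $c=-m$ lies outside $\langle -s\lambda_1,0\rangle$, so on every line $\text{Re}(c)=-t\lambda_1$ with $0\leq t<s$ one has $|c+m|^{-1}\leq(m-t\lambda_1)^{-1}$; Mellin-inverting $\widehat{f}$ on such a line delivers the required $f$ directly, no obstruction arising. For \eqref{for:92}, when $\lambda_1<0$ the operator $(I-u_1^2)^{t/2}$ grows like $r^{|t\lambda_1|}$ at infinity rather than blowing up at the origin, so the relevant Mellin strip becomes $\langle 0,s|\lambda_1|\rangle$; Corollary \ref{cor:2} replaces Lemma \ref{le:14} throughout, and since $-m<0$ again lies outside this strip the pole causes no obstruction. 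The main obstacle is \eqref{for:53}: turning the single scalar condition $\mathcal{M}(g,-m)=0$ into $L^2$-bounds for $\widehat{f}$ uniformly on lines passing arbitrarily close to the pole requires simultaneous control of $\mathcal{M}(g,\cdot)$ and $\partial_c\mathcal{M}(g,\cdot)$ on neighboring vertical lines, which is exactly where the strict inequality $s>m/\lambda_1$ (rather than $\geq$) and the small parameter $\epsilon$ in the piecewise bound are consumed.
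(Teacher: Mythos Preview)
Your overall strategy matches the paper's: Mellin transform turns $(X+m)f=g$ into $(c+m)\mathcal{M}(f,c)=\mathcal{M}(g,c)$, and everything reduces to controlling $\mathcal{P}(z)=\mathcal{M}(g,z)/(z+m)$ on vertical lines in the strip. Parts \eqref{for:76} and \eqref{for:92} are handled exactly as in the paper.

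There is, however, a genuine gap in your argument for \eqref{for:52}. You assert that Lemma~\ref{le:14}\eqref{for:56} makes $\mathcal{M}(f,c)$ ``analytic on $\langle -m,0\rangle$ and continuous up to the closed strip,'' and then conclude by letting $c\to -m$. But Lemma~\ref{le:14}\eqref{for:56} gives only analyticity on the \emph{open} strip; nothing prevents $\mathcal{M}(f,c)$ from blowing up as $c\to -m$ within the strip (indeed, this is precisely what would happen if $\mathcal{M}(g,-m)\neq 0$, since then $\mathcal{M}(f,c)=\mathcal{M}(g,c)/(c+m)$ has a pole there). The argument is circular as written. The paper instead works directly on the boundary line $\text{Re}(c)=-m$: the hypothesis $f\cdot r^{-m}\in\mathbb{E}^\delta$ means $\mathcal{M}(f,-m+t\sqrt{-1})\in L^2(\RR,dt)$, and Lemma~\ref{le:14}\eqref{for:57} on the \emph{closed} strip yields $\mathcal{M}(g,-m+t\sqrt{-1})=t\sqrt{-1}\,\mathcal{M}(f,-m+t\sqrt{-1})$ for a.e.\ $t$. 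Since $\int|t^{-1}\mathcal{M}(g,-m+t\sqrt{-1})|^2\,dt=\norm{f\cdot r^{-m}}^2<\infty$ while $\mathcal{M}(g,\cdot)$ is continuous at $-m$, the integral would diverge near $t=0$ unless $\mathcal{M}(g,-m)=0$.

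For \eqref{for:53} your three-step scheme is the paper's, with two harmless variations. In Step~I near the pole you invoke a Taylor expansion of $\mathcal{M}(g,c)$ at $c=-m$; the paper instead applies the maximum modulus principle to $\mathcal{P}$ on the disc $B_{-m}(\epsilon)$, bounding $\sup_{\partial B_{-m}(\epsilon)}|\mathcal{P}|$ by $C_\epsilon\sup_{\partial B_{-m}(\epsilon)}|\mathcal{M}(g,\cdot)|$ and then by Lemma~\ref{for:40}. The maximum-modulus route is shorter because it needs only the pointwise bound \eqref{for:42} on $\mathcal{M}(g,\cdot)$, not separate control of its derivatives and Taylor remainder. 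In Step~III you appeal to Fatou; the paper instead shows $\mathcal{P}(-c+t\sqrt{-1})\to\mathcal{P}(t\sqrt{-1})$ in $L^2(dt)$ as $c\to 0^+$ (using $g\cdot r^{-c}\to g$ in $\mathbb{E}^\delta$), deduces $f\cdot r^{-c}\to f_0$ in $\mathbb{E}^\delta$, passes to an a.e.\ subsequence, and identifies $f_0$ with $f$ pointwise. Your Fatou argument is a legitimate alternative once you have established in Step~II that $f$ is continuous on $(0,\infty)$, so that $f\cdot r^{-c}\to f$ pointwise.
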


\begin{proof}

\eqref{for:52}: By assumption we have $f\cdot r^{-m},\,g\cdot r^{-m}\in \mathbb{E}^\delta$. Since $f$ is the solution of the equation \eqref{for:39}, $Xf\cdot r^{-m}$ is also in $\mathbb{E}^\delta$. Hence for almost all $t\in\RR$ we have
\begin{align*}
 \mathcal{M}(g,-m+t\sqrt{-1})&=\mathcal{M}((X+m)f,-m+t\sqrt{-1})\\
 &\overset{\text{(1)}}{=}t\sqrt{-1}\mathcal{M}(f,-m+t\sqrt{-1}).
\end{align*}
Here in $(1)$ we used \eqref{for:57} of Lemma \ref{le:14}.

By \eqref{for:56} of Lemma \ref{le:14}, We see that the line $\{-m+t\sqrt{-1}\}_{t\in\RR}$ is inside the analytic strip of $\mathcal{M}(g,\cdot)$. Moreover,
from \eqref{for:49} we have
\begin{align*}
  &\int_{\RR}|t^{-1}\mathcal{M}(g,-m+t\sqrt{-1})|^2dt\\
  &=\int_{\RR}|\mathcal{M}(f,-m+t\sqrt{-1})|^2dt=\norm{f\cdot r^{-m}}^2.
\end{align*}
This shows that $\mathcal{M}(g,-m)=0$. Then we get \eqref{for:52}.

\medskip
\eqref{for:53}: \emph{\textbf{Step I: Construction of distributional solutions}}.

\smallskip
For any $z\in \langle\langle -s\lambda_1,0\rangle\rangle$ set
\begin{align}\label{for:77}
 \mathcal{P}(z)=\frac{\mathcal{M}(g,z)}{m+z}.
\end{align}
By \eqref{for:56} of Lemma \ref{le:14}, $\mathcal{M}(g,\cdot)$ is analytic on the strip $\langle -s\lambda_1,0\rangle$. Note that the line $\{-m+t\sqrt{-1}\}_{t\in\RR}$ is inside the analytic strip.
The the assumption $\mathcal{M}(g,-m)=0$ implies that $\mathcal{P}(z)$ is
also analytic on the strip $\langle -s\lambda_1,0\rangle$. Since the definition strip of $\mathcal{M}(g,\cdot)$ is $\langle\langle -s\lambda_1,0\rangle\rangle$, the definition strip of $\mathcal{P}$ is also $\langle\langle -s\lambda_1,0\rangle\rangle$.

Choose $\epsilon<\min\{\frac{m}{2},\frac{1}{2},\frac{s\lambda_1-m}{2}\}$. Then $\langle\langle -m-\epsilon,-m+\epsilon\rangle\rangle\subset \langle -s\lambda_1,0\rangle$.  It is clear that
\begin{align*}
 |\mathcal{P}(z)|\leq C_\epsilon|\mathcal{M}(g,z)|,\qquad \text{if }|z+m|\geq \epsilon.
\end{align*}
Then $-s\lambda_1\leq a\leq 0$ for we have
\begin{align*}
  &\text{\small$\int_{|a+t\sqrt{-1}-m|\geq\frac{\epsilon}{2}} |\mathcal{P}(a+t\sqrt{-1})|^2dt$}\notag\\
  &\leq C_\epsilon\int_\RR |\mathcal{M}(g,a+t\sqrt{-1})|^2dt=C_\epsilon\norm{u_1^{\frac{|a|}{\lambda_1}}g}
  \leq C_\epsilon\norm{g}_{\frac{|a|}{\lambda_1}}.
\end{align*}
Let $B_{-m}(\epsilon)$ denote the  ball of radius $\epsilon$ centered at $-m$ on the complex plane. Then by maximum modulus principle, if $|z+m|<\epsilon$ we have
\begin{align*}
  \max_{z\in B_{-m}(\epsilon)}|\mathcal{P}(z)|&\leq \max_{z\in \partial B_{-m}(\epsilon)}|\mathcal{P}(z)|\leq C_\epsilon\max_{z\in \partial B_{-m}(\epsilon)}|\mathcal{M}(g,\partial B_{-m}(\epsilon))|\\
  &\overset{\text{(1)}}{\leq} C_{\epsilon}\norm{g}_{\frac{m+2\epsilon}{\lambda_1}}.\notag
\end{align*}
Here $(1)$ follows from Lemma \ref{for:40}.

Hence we have
\begin{align}\label{for:60}
\text{\small$\int_\RR |\mathcal{P}(a+t\sqrt{-1})|^2dt$}\leq&\left\{\begin{aligned} &C_\epsilon\norm{g}_{\frac{|a|}{\lambda_1}},&\qquad &\text{if }|a+m|\geq \epsilon/2,\\
&C_{\epsilon}\norm{g}_{\frac{m+\epsilon}{\lambda_1}},&\qquad &\text{if }|a+m|<\epsilon/2,
\end{aligned}
 \right.
\end{align}
for any $-s\lambda_1\leq a\leq 0$.

From \eqref{for:63} and \eqref{for:64} the function $f_a(r)$ obtained by
\begin{align*}
  f_a(r)&=\frac{1}{\text{\small$2\pi\sqrt{-1}$}}\int_{\text{\tiny$a-\sqrt{-1}\infty$}}^{\text{\tiny$a+\sqrt{-1}\infty$}}r^{-z}\mathcal{P}(z)dz,
\end{align*}
where $-s\lambda_1\leq a\leq 0$ exists with estimates
\begin{align}
 \norm{f_a\cdot r^{a}}^2&=\text{\small$\int_\RR \big|\mathcal{P}(a+t\sqrt{-1})\big|^2dt$}\label{for:1}\\
 &\overset{\text{(1)}}{\leq} \left\{\begin{aligned} &C_\epsilon\norm{g}_{\frac{|a|}{\lambda_1}},&\qquad &\text{if }|a+m|\geq\epsilon/2,\\
&C_{\epsilon}\norm{g}_{\frac{m+\epsilon}{\lambda_1}},&\qquad &\text{if }|a+m|<\epsilon/2, \label{for:61}
\end{aligned}
 \right.
 \end{align}
 for any $-s\lambda_1\leq a\leq 0$. Here in $(1)$ we use \eqref{for:60}.

 Hence we see that these $f_a$, $-s\lambda_1\leq a\leq 0$, are distributions. Next, we will shows that these $f_a$, $-s\lambda_1< a< 0$ are distributional solutions of the equation \eqref{for:39}. Note that
  \begin{align}\label{for:71}
   |z\mathcal{P}(z)|\leq C_{m}  |\mathcal{M}(g,z)|\overset{\text{(2)}}{\leq} C_{m,\epsilon}\norm{g}_{s}
\end{align}
if $\frac{\epsilon}{\lambda_1}\leq -\frac{\text{Re}(z)}{\lambda_1}\leq s-\frac{\epsilon}{\lambda_1}$ and $|z+m|\geq \min\{\frac{m}{2}, \frac{s\lambda_1-m}{2},\frac{1}{2}\}$. Here  $(1)$ follows from of Lemma \ref{for:40}.

Moreover, by noting that $\mathcal{P}(z)$ is analytic on the strip $\langle -s\lambda_1,0\rangle$ we conclude that
 \begin{align}\label{for:70}
   |z\mathcal{P}(z)|\leq C_{m, \epsilon,g}\norm{g}_{s}
\end{align}
if $\frac{\epsilon}{\lambda_1}\leq -\frac{\text{Re}(z)}{\lambda_1}\leq s-\frac{\epsilon}{\lambda_1}$. Since
\begin{align*}
 \mathcal{M}(g)_a(t)=\mathcal{M}(g,a+t\sqrt{-1})\in L^2(\RR,dt),
\end{align*}
for $-s\lambda_1\leq a\leq 0$, \eqref{for:71} and \eqref{for:70} imply that $t\mathcal{P}_a(t)\in L^2(\RR,dt)$ if we let $\mathcal{P}_a(t)=\mathcal{P}(a+t\sqrt{-1})$, $-s\lambda_1< a<0$. From
 \eqref{for:60} we see that $\mathcal{P}_a(t)\in L^2(\RR,dt)$. Then by \eqref{for:64} we have
\begin{align*}
 &\frac{1}{\text{\small$\sqrt{2\pi}$}}\mathcal{F}^{-1}\big((a+t\sqrt{-1})\mathcal{P}_a(t),x\big)\\
 &=a f_a(e^{-x})e^{-ax}+\partial_x\big(f_a(e^{-x})e^{-ax}\big)\\
 &=-\partial_rf_a(e^{-x})e^{-(a+1)x}.
\end{align*}
Hence we have
\begin{align*}
  &-\partial_rf_a(e^{-x})e^{-(a+1)x}+mf_a(e^{-x})e^{-ax}\\
  &=\frac{1}{\text{\small$\sqrt{2\pi}$}}\mathcal{F}^{-1}\big((a+t\sqrt{-1}+m)\mathcal{P}_a(t),x\big)\\
  &\overset{\text{(1)}}{=}\frac{1}{\text{\small$\sqrt{2\pi}$}}\mathcal{F}^{-1}\big(\mathcal{M}(g)_a(t),x\big)\\
  &=g(e^{-x})e^{-ax}.
\end{align*}
Here in $(1)$ we use relation \eqref{for:77}.

This is equivalent to
\begin{align}\label{for:72}
 -\partial_rf_a(r)r+mf_a(r)=g(r).
\end{align}
Hence $f_a$ is a solution of the equation \eqref{for:39}.

\medskip

\emph{\textbf{Step II: Coincidence of $f_a$, $-s\lambda_1< a< 0$}}.

\smallskip

In previous step we showed that both $\mathcal{P}_a(t)$ and $t\mathcal{P}_a(t)$ are in $L^2(\RR,dt)$ for any $-s\lambda_1< a< 0$. This implies that $\mathcal{P}_a(t)\in L^1(\RR)$. Moreover, \eqref{for:71} implies that for any sufficiency small $\epsilon>0$,
$\mathcal{P}_a(t)$ tends to zero uniformly as $t\to \pm\infty$ for any $a\in [-s\lambda_1+\eta,-\eta]$. Then by Mellin inversion theorem, $f_b=f_a$, $-s\lambda_1< a,b< 0$; moreover, letting $f=f_a$, $-s\lambda_1< a< 0$, $f$ is continuous on $(0,\infty)$ and the Mellin transform of $f$ is $\mathcal{P}(z)$ on $\langle -s\lambda_1,0\rangle$.  Then it follows from \eqref{for:61} that
\begin{align}\label{for:73}
 \norm{f\cdot r^{a}}^2& \leq \left\{\begin{aligned} &C_\epsilon\norm{g}_{\frac{|a|}{\lambda_1}},&\qquad &\text{if }|a+m|\geq\epsilon,\\
&C_{\epsilon}\norm{g}_{\frac{m+\epsilon}{\lambda_1}},&\qquad &\text{if }|a+m|<\epsilon,
\end{aligned}
 \right.
 \end{align}
 for any $-s\lambda_1< a< 0$.

\medskip

\emph{\textbf{Step III: Estimates of  $\norm{f}$ and $\norm{f\cdot r^{-s\lambda_1}}$}}.

\smallskip
 By using \eqref{for:64}, from \eqref{for:60} we see that $f_0,\,f_{-s\lambda_1}\in \mathbb{E}^\delta$ with
\begin{align}\label{for:62}
 \norm{f_0}\leq C_\epsilon\norm{g}\quad\text{ and }\quad\norm{f_{-s\lambda_1}}\leq C_\epsilon\norm{g}_s.
\end{align}
Since $g\cdot r^{-c}\to g$ in $\mathbb{E}^\delta$ and $g\cdot r^{-s\lambda_1+c}\to g\cdot r^{-s\lambda_1}$ in $\mathbb{E}^\delta$ as $c\to 0^+$ we have
\begin{align*}
   &\text{\small$\int_\RR \big|\mathcal{M}(g,-c+t\sqrt{-1})-\mathcal{M}(g,0+t\sqrt{-1})\big|^2dt$}\to 0\qquad\text{and}\\
   &\text{\small$\int_\RR \big|\mathcal{M}(g,-s\lambda_1+c+t\sqrt{-1})-\mathcal{M}(g,-s\lambda_1+t\sqrt{-1})\big|^2dt$}\to 0
\end{align*}
as $c\to 0^+$. Then we have
\begin{align*}
 &\int_\RR \big|\mathcal{P}(-c+t\sqrt{-1})-\mathcal{P}(0+t\sqrt{-1})\big|^2dt\to 0\qquad\text{and }\\
 &\int_\RR \big|\mathcal{P}(-s\lambda_1+c+t\sqrt{-1})-\mathcal{P}(-s\lambda_1+t\sqrt{-1})\big|^2dt\to 0
\end{align*}
as $c\to 0^+$,  which implies that
\begin{align*}
 \norm{f\cdot r^{-c}-f_0}^2\to 0 \quad\text{ and }\quad \norm{f\cdot r^{-s\lambda_1+c}-f_{-s\lambda_1}}^2\to 0
\end{align*}
in $\mathbb{E}^\delta$ as $c\to 0^+$. Then there exists a sequence $c_n\to0$ as $n\to \infty$ such that
\begin{align*}
  f\cdot r^{-c_n}\to f_0 \quad\text{ and }\quad f\cdot r^{-s\lambda_1+c_n}\to f_{-s\lambda_1}
\end{align*}
for almost all $r$ (with respect to the the measure $\frac{1}{r}dr$) on $(0,\infty)$.

Since $f\cdot r^{-c}\to f$  and $f\cdot r^{-s\lambda_1+c}\to f\cdot r^{-s\lambda_1}$ as $c\to 0^+$ on $(0,\infty)$,  we get
\begin{align*}
 f_0=f \quad\text{ and }\quad f_{-s\lambda_1}=f\cdot r^{-s\lambda_1} \quad\text{ in }\quad \mathbb{E}^\delta.
\end{align*}
Then \eqref{for:62} shows that
\begin{align*}
 \norm{f}\leq C_\epsilon\norm{g}\quad\text{ and }\quad\norm{f\cdot r^{-s\lambda_1}}\leq C_\epsilon\norm{g}_s.
\end{align*}
This together with \eqref{for:73} give the result.

\medskip
\eqref{for:76}: Define $\mathcal{P}(z)$ on the strip on the strip $\langle\langle -t\lambda_1,0\rangle\rangle$ as in \eqref{for:77} for any $0<t<s$. It is clear that
\begin{align}\label{for:8}
 |\mathcal{P}(z)|\leq |z+m|^{-1}|\mathcal{M}(g,z)|
\end{align}
for any $z\in \langle\langle -t\lambda_1,0\rangle\rangle$.

Arguments in \eqref{for:53} show that $f=f_a$, $0\leq a\leq t\lambda_1$ is well-defined and continuous on $(0,\infty)$; moreover the Mellin transform of $f$ is $\mathcal{P}(z)$ on $\langle\langle -t\lambda_1,0\rangle\rangle$. Then it is clear that $(I-u_1^2)^\frac{t}{2}f\in \mathbb{E}^\delta$ and the estimate follows immediately from \eqref{for:1} and \eqref{for:8}.

\medskip
\eqref{for:92}: By \eqref{for:93} of Corollary \ref{cor:2}, we see that $\mathcal{P}(z)$ as defined in \eqref{for:77} is analytic on the strip $\langle0, -s\lambda_1\rangle$ with estimates
\begin{align}\label{for:9}
\text{\small$\int_\RR |\mathcal{P}(a+t\sqrt{-1})|^2dt$}\leq \frac{1}{m+a}\norm{g}_{-\frac{a}{\lambda_1}}
\end{align}
for any $0\leq a\leq -s\lambda_1$.

By following the same proof line as in \eqref{for:53}, we can show that $f=f_a$, $0\leq a\leq -s\lambda_1$ is well-defined and continuous on $(0,\infty)$; moreover the Mellin transform of $f$ is $\mathcal{P}(z)$ on $\langle\langle 0,-s\lambda_1\rangle\rangle$. Then it is clear that $(I-u_1^2)^\frac{s}{2}f\in \mathbb{E}^\delta$ and the estimate follows from \eqref{for:9} immediately.

\end{proof}

\subsection{Global twisted coboundary for the HP flow in $\RR\ltimes\RR$}\label{sec:8} Let $(\beta,\mathcal{U})$ be a unitary representation of $G$ (see Section \ref{sec:10}) without non-trivial $u_1$-invariant vectors. We now discuss how to obtain a global solution from the solution which exists in each irreducible component of $\mathcal{U}$. By general arguments in Section \ref{sec:3} there is a direct decomposition of
$\mathcal{U}=\int_Z \mathcal{U}_zd\mu(z)$ of irreducible unitary representations of $G$ for some measure space $(Z,\mu)$. If $\beta$ has no non-trivial $u_1$-invariant vectors, then for almost
all $z\in Z$, $\beta_z$ has no non-trivial $u_1$-invariant vectors. This means that for almost
all $z\in Z$ $(\beta_z,\mathcal{U}_z)=(\beta^\delta,\mathbb{E}^\delta)$.   Arguments in Section \ref{sec:3} show that we can apply
Theorem \ref{th:2} to prove the following:
\begin{corollary}\label{cor:1}
Suppose $s>0$. Let $(\beta,\mathcal{U})$ be a unitary representation of $G$ without non-trivial $u_1$-invariant vectors.  If $g\in \mathcal{U}^s$, then
\begin{enumerate}

  \item\label{for:78} if $\lambda_1>0$, $s>\frac{m}{\lambda_1}$ and the equation
\eqref{for:39} has a solution $f\in \mathcal{U}$ with $(I-u_1^2)^{\text{\tiny$\frac{m}{2\lambda_1}$}} f\in \mathcal{U}$,  then $u_1^tf\in \mathcal{U}$ for any $0\leq t\leq s$  with estimates
\begin{align*}
 \norm{(I-u_1^2)^\frac{t}{2}f}&\leq \left\{\begin{aligned} &C_\epsilon\norm{g}_{t},&\qquad &\text{if }|t\lambda_1-m|\geq \epsilon,\\
&C_{\epsilon}\norm{g}_{\frac{m+\epsilon}{\lambda_1}},&\qquad &\text{if }|t\lambda_1-m|<\epsilon,
\end{aligned}
 \right.
 \end{align*}
where $\epsilon<\min\{\frac{m}{2},\frac{1}{2},\frac{s\lambda_1-m}{2}\}$;

\medskip
\item \label{for:86} if $\lambda_1>0$, $s>\frac{m}{\lambda_1}$ and $\mathcal{D}(g)=0$ for any $(X-m)$-invariant distribution $\mathcal{D}$, then the twisted cohomological equation $(X+m)f=g$ has a solution $f\in \mathcal{U}$ with $(I-u_1^2)^\frac{s}{2}f\in \mathcal{U}$;

\medskip
\item\label{for:85}  if $\lambda_1>0$ and $s\leq \frac{m}{\lambda_1}$ the equation
\eqref{for:39} has a solution $f\in \mathcal{U}$ with $(I-u_1^2)^\frac{t}{2}f\in \mathcal{U}$, for any $0\leq t<s$ satisfying
\begin{align*}
  \norm{(I-u_1^2)^\frac{t}{2}f}\leq C(m-t\lambda_1)^{-1}\norm{g}_t;
\end{align*}

\medskip
\item \label{for:95} if $\lambda_1<0$ the equation
\eqref{for:39} has a solution $f\in \mathcal{U}$ with $(I-u_1^2)^\frac{s}{2}f\in \mathcal{U}$ satisfying
\begin{align*}
  \norm{(I-u_1^2)^\frac{t}{2}f}\leq C(m-t\lambda_1)^{-1}\norm{g}_t,\qquad 0\leq t\leq s.
\end{align*}

\end{enumerate}

\end{corollary}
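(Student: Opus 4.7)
The plan is to reduce each assertion to the irreducible case handled by Theorem \ref{th:2}, using the direct integral machinery of Section \ref{sec:3} together with the fiberwise uniqueness provided by Lemma \ref{le:12}.

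First I would decompose $\mathcal{U} = \int_Z \mathcal{U}_z d\mu(z)$ into irreducibles. Since $\beta$ has no non-trivial $u_1$-invariant vectors, the same holds for $\beta_z$ for $\mu$-a.e.\ $z$, and the Mackey classification of Section \ref{sec:10} identifies each such fiber with $(\beta^{\delta_z}, \mathbb{E}^{\delta_z})$ for some $\delta_z \in \{+, -\}$. The induced decomposition \eqref{for:51} of Sobolev spaces, combined with the decomposability of $u_1$ and its continuous functional calculus, then lets me write $f = \int_Z f_z d\mu(z)$, $g = \int_Z g_z d\mu(z)$, and $(I - u_1^2)^{t/2} f = \int_Z (I - u_1^2)^{t/2} f_z d\mu(z)$, with all membership statements holding fiberwise a.e.

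For parts (\ref{for:78}), (\ref{for:85}) and (\ref{for:95}) I would proceed fiberwise. In part (\ref{for:78}), the hypothesis $(I - u_1^2)^{m/(2\lambda_1)} f \in \mathcal{U}$ descends to a.e.\ fiber; Theorem \ref{th:2}(\ref{for:52}) then forces $\mathcal{D}_{\delta_z, m}(g_z) = 0$, and Theorem \ref{th:2}(\ref{for:53}) produces a fiberwise solution $\tilde f_z$ satisfying the stated estimates. Lemma \ref{le:12}(\ref{for:89}) applied with $\mathfrak{u} = X$ (and $m \neq 0$) yields uniqueness in each fiber, so $\tilde f_z = f_z$ a.e., and integrating the squared fiberwise bounds over $Z$ delivers the global estimate. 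Parts (\ref{for:85}) and (\ref{for:95}) are analogous but simpler: Theorem \ref{th:2}(\ref{for:76}) respectively (\ref{for:92}) constructs $f_z$ directly via its Mellin transform, the construction is measurable in $z$, and the uniform fiberwise estimates integrate to the global ones.

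For part (\ref{for:86}) the real work is translating the global obstruction condition into a fiberwise one. For each bounded measurable $\phi: Z \to \CC$, I would define
\[
    \mathcal{D}_\phi(h) = \int_Z \phi(z) \mathcal{D}_{\delta_z, m}(h_z) d\mu(z), \qquad h \in \mathcal{U}^\ell, \ \ell > m/\lambda_1.
\]
The uniform bound \eqref{for:45} of Lemma \ref{for:40} makes $\mathcal{D}_\phi$ a bounded linear functional on $\mathcal{U}^\ell$, and the fiberwise $(X - m)$-invariance asserted in Lemma \ref{for:40} propagates to $\mathcal{D}_\phi$. The hypothesis on $g$ forces $\mathcal{D}_\phi(g) = 0$ for every $\phi$, hence $\mathcal{D}_{\delta_z, m}(g_z) = 0$ for $\mu$-a.e.\ $z$. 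Theorem \ref{th:2}(\ref{for:53}) then supplies fiberwise solutions $f_z$ with uniform estimate on $(I - u_1^2)^{s/2} f_z$, and $f = \int_Z f_z d\mu(z)$ gives the desired global solution with $(I - u_1^2)^{s/2} f \in \mathcal{U}$.

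The main obstacle is precisely the measurability and uniformity required in part (\ref{for:86}): the fiberwise solutions must depend measurably on $z$, and the constants controlling their Sobolev norms must be uniform across fibers. Uniformity is automatic because the constants in Theorem \ref{th:2} depend only on $\epsilon, \lambda_1, m$ and $s$, not on the choice of irreducible component. Measurability follows from the explicit Mellin inversion formula in \eqref{for:77}, which realizes $f_z$ as an explicit functional of $g_z$, so the construction in \emph{Step I} of the proof of Theorem \ref{th:2} depends measurably on the data.
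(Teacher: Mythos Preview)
Your proposal is correct and follows essentially the same approach as the paper: decompose into irreducible fibers, apply Theorem \ref{th:2} fiberwise, and integrate the uniform estimates. In fact you are more careful than the paper on two points: you invoke Lemma \ref{le:12} explicitly to identify the fiberwise constructed solution with the given $f_z$ in part (\ref{for:78}), and in part (\ref{for:86}) you spell out how the global obstruction hypothesis descends to the fibers by building the global $(X-m)$-invariant distributions $\mathcal{D}_\phi$ from the fiberwise ones --- the paper simply asserts this descent in one line.
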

\begin{proof}
The cohomological equation
\eqref{for:39} has a decomposition
\begin{align}\label{for:79}
 (X+m)f_z=g_z
\end{align}
with $g_z\in\mathcal{U}_z^s$ for almost all $z\in Z$. Next we show the proof of \eqref{for:78} for the case of $|t\lambda_1-m|\geq\epsilon$.
The assumption implies that the equation \eqref{for:79} has a solution $f_z\in \mathcal{U}_z$ with $(I-u_1^2)^{\text{\tiny$\frac{m}{2\lambda_1}$}} f_z\in \mathcal{U}_z$ for almost all $z\in Z$.  Then it follows from \eqref{for:52} of Theorem \ref{th:2} that $\mathcal{D}_{\delta,m}(g_z)=0$ for almost all $z\in Z$; moreover, \eqref{for:53} of Theorem \ref{th:2} shows that
\begin{align*}
 \norm{(I-u_1^2)^\frac{t}{2}f_z}\leq C_\epsilon\norm{g_z}_{t}
\end{align*}
almost all $z\in Z$ if $|t\lambda_1-m|\geq\epsilon$. Since $C_\epsilon$ are constants only dependent on $\epsilon$ we have
\begin{align}\label{for:91}
 \norm{(I-u_1^2)^\frac{t}{2}f}^2=\int_Z\norm{(I-u_1^2)^\frac{t}{2}f_z}^2d\mu(z)\leq C_\epsilon \int_Z\norm{g_z}_t^2d\mu(z)=\norm{g}^2_{t}.
\end{align}
This proves of the case of $|t\lambda_1-m|\geq\epsilon$. The other cases follow in exactly the same way. Hence we get \eqref{for:78}, \eqref{for:85} and \eqref{for:95}.

To prove \eqref{for:86}, we note that
the assumption and Lemma \ref{for:40}  implies that for almost all $z\in Z$, $\mathcal{D}_{\delta,m}(g_z)=0$ if $(\beta_z,\mathcal{U}_z)=(\beta^\delta,\mathbb{E}^\delta)$. \eqref{for:53} of Theorem \ref{th:2}
shows that the equation \eqref{for:79} has a solution $f_z\in \mathcal{U}_z$ with $u_1^{s} f_z\in \mathcal{U}_z$ for almost all $z\in Z$ with estimates
\begin{align*}
 \norm{(I-u_1^2)^\frac{s}{2}f_z}\leq C_\epsilon\norm{g_z}_{s}
\end{align*}
for almost all $z\in Z$. Then similar to \eqref{for:91}, we see that $f=\int_Zf_zd\mu(z) \in \mathcal{U}$ with $(I-u_1^2)^\frac{s}{2}f\in \mathcal{U}$. Then we get \eqref{for:86}.

\end{proof}
\subsection{Global common solution for the cocycle equation in $(\RR\ltimes\RR)\times\RR$} In this part, we study the cocycle equation for $G\times\RR$, which will be used to prove Theorem \ref{th:7}.
\begin{proposition}\label{po:3}
Suppose $\lambda_1>0$, $s>\frac{m}{\lambda_1}$. Let $(\beta,\mathcal{U})$ be a unitary representation of $G\times\RR$ without non-trivial $u_1$ or $\RR$-invariant vectors. Let $\chi=1\in\text{Lie}(\RR)$. If $g_1,g_2\in \mathcal{U}^s$, such that
\begin{align*}
 (X+m)g_1=(\chi+m_1) g_2
\end{align*}
$m_1\in\RR$ then there exists $f\in \mathcal{U}$ with $(I-u_1^2)^\frac{s}{2}f\in \mathcal{U}$ such that
\begin{align*}
  (\chi+m_1)f=g_1,\qquad\text{and}\qquad (X+m)f=g_2.
\end{align*}

\begin{proof}
Irreducible unitary representations of $G$ without non-trivial $\RR$-invariant vectors are of the form $\beta^\delta\otimes \zeta_v$, where $\zeta_v$, $v\in\RR$ is an irreducible unitary representation of $\RR$ with the action $\zeta_v(x)=e^{\sqrt{-1}vx}$ for any $x\in\RR$. Arguments in Section \ref{sec:3} allows us to reduce
our analysis of the cocycle equation to each irreducible component $\beta^\delta_\nu\otimes \zeta_v$ appears in $\beta$. Then by assumption, we only need to consider $\beta^\delta_\nu\otimes \zeta_v$, $v\neq0$.

Note that the cocycle equation has the form
\begin{align}\label{for:10}
 (X+m)g_{1,v}=(\sqrt{-1}v+m_1) g_{2,v}
\end{align}
in $\beta^\delta_\nu\otimes \zeta_v$, where $g_{1,v},g_{2,v}\in (\beta^\delta)^s$. Since $g_{1,v}\in (\beta^\delta)^s$, it follows from \eqref{for:52} of Theorem \ref{th:2} that $\mathcal{D}_{\delta,m}(g_{2,v})=0$ (note that
$\sqrt{-1}v+m_1\neq0$). Then \eqref{for:53} of Theorem \ref{th:2} shows that the equation
\begin{align}\label{for:11}
 (X+m)f_v=g_{2,v}
\end{align}
has a solution $f_{v}\in \mathbb{E}^\delta$ with $(I-u_1^2)^\frac{s}{2}f_v\in \mathbb{E}^\delta$ satisfying
\begin{align}\label{for:6}
 \norm{(I-u_1^2)^\frac{s}{2}f_v}\leq C_{s,\lambda_1,m}\norm{g_{2,v}}_s.
\end{align}
From \eqref{for:10} and \eqref{for:11} we immediately have
\begin{align*}
(X+m)g_{1,v}=(\sqrt{-1}v+m_1) (X+m)f_v.
\end{align*}
From Lemma \ref{le:12}, we see that
\begin{align*}
 (\sqrt{-1}v+m_1)f_v=g_{1,v}.
\end{align*}
Since the constant $C_{s,\lambda_1,m}$ in \eqref{for:6} is uniform (independent of the representation $\beta^\delta_\nu\otimes \zeta_v$), hence we have
a common solution $f\in \mathcal{U}$ with $(I-u_1^2)^\frac{s}{2}f\in \mathcal{U}$.
\end{proof}

\end{proposition}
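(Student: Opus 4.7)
The plan is to decompose the representation into irreducibles via the direct integral decomposition of Section \ref{sec:3}, handle each component using Theorem \ref{th:2} together with the uniqueness statement in Lemma \ref{le:12}, and reassemble a global solution using uniform Sobolev constants.

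First I would decompose $\beta=\int_Z\beta_z\,d\mu(z)$. Irreducible unitary representations of the product group $G\times\RR$ are external tensor products $\pi\otimes\zeta_v$, where $\pi$ is irreducible for $G$ and $\zeta_v(x)=e^{\sqrt{-1}vx}$ is a character of $\RR$. The hypothesis that $\beta$ has no non-trivial $u_1$-invariant or $\RR$-invariant vectors forces $\pi=\beta^\delta$ (the classification from Section \ref{sec:10}) and $v\neq 0$ in almost every component. In such a component the cocycle equation becomes the scalar-twisted identity
\[
(X+m)g_{1,v}=(\sqrt{-1}v+m_1)g_{2,v},
\]
with $g_{1,v},g_{2,v}\in(\mathbb{E}^\delta)^s$.

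The crucial point is that the obstruction $\mathcal{D}_{\delta,m}$ annihilates $g_{2,v}$. Since $s>m/\lambda_1$, the Sobolev hypothesis on $g_{1,v}$ yields $(I-u_1^2)^{m/(2\lambda_1)}g_{1,v}\in\mathbb{E}^\delta$, so Theorem \ref{th:2}(\ref{for:52}) applied with solution $g_{1,v}$ and right-hand side $(\sqrt{-1}v+m_1)g_{2,v}$ gives $\mathcal{D}_{\delta,m}\bigl((\sqrt{-1}v+m_1)g_{2,v}\bigr)=0$; because $v\neq 0$ forces $\sqrt{-1}v+m_1\neq 0$, dividing through yields $\mathcal{D}_{\delta,m}(g_{2,v})=0$. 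Theorem \ref{th:2}(\ref{for:53}) then produces $f_v\in\mathbb{E}^\delta$ with $(X+m)f_v=g_{2,v}$ and $(I-u_1^2)^{s/2}f_v\in\mathbb{E}^\delta$, satisfying the estimate $\norm{(I-u_1^2)^{s/2}f_v}\leq C_{s,\lambda_1,m}\norm{g_{2,v}}_s$ with a constant independent of $v$.

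To recover the first equation, set $h_v=(\sqrt{-1}v+m_1)f_v-g_{1,v}$. Then
\[
(X+m)h_v=(\sqrt{-1}v+m_1)(X+m)f_v-(X+m)g_{1,v}=(\sqrt{-1}v+m_1)g_{2,v}-(X+m)g_{1,v}=0,
\]
and since the one-parameter subgroup generated by $X$ is isomorphic to $\RR$, Lemma \ref{le:12}(\ref{for:89}) (applied with $s=m\neq 0$) forces $h_v=0$, so $(\sqrt{-1}v+m_1)f_v=g_{1,v}$. Because the constant $C_{s,\lambda_1,m}$ does not depend on the component, the componentwise $f_v$ assemble into $f=\int_Z f_z\,d\mu(z)\in\mathcal{U}$ with $(I-u_1^2)^{s/2}f\in\mathcal{U}$, solving both equations globally. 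The only nontrivial step is the verification that $\mathcal{D}_{\delta,m}(g_{2,v})=0$; once that reduction is in place, the remainder is a direct packaging of Theorem \ref{th:2} and Lemma \ref{le:12}.
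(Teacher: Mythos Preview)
Your proposal is correct and follows essentially the same route as the paper: direct integral decomposition into components $\beta^\delta\otimes\zeta_v$ with $v\neq 0$, vanishing of the obstruction $\mathcal{D}_{\delta,m}(g_{2,v})$ via Theorem \ref{th:2}\eqref{for:52}, construction of $f_v$ via Theorem \ref{th:2}\eqref{for:53}, and recovery of the second equation by the uniqueness in Lemma \ref{le:12}. If anything, you spell out the obstruction-vanishing step (dividing out the nonzero scalar $\sqrt{-1}v+m_1$) and the uniqueness step (forming $h_v$ explicitly) a bit more carefully than the paper does.
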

\subsection{Global twisted coboundary for the HP flow in $\RR\ltimes\RR^2$}  In this part, we obtain results in unitary representations of $\RR\ltimes\RR^2$ as described in Section \ref{sec:10}.
We also assume notations in Section \ref{sec:10}.
\begin{proposition}\label{po:1}
Suppose $\lambda_1\geq\lambda_2>0$. In any unitary representation $(\beta,\,\mathcal{E})$ of $S$ without non-trivial $u_1$ or $u_2$-fixed vectors, if $g\in \mathcal{E}^s$, $s>\frac{m}{\lambda_1}$, and the equation
\eqref{for:39} has a solution $f\in \mathcal{E}$ with $(I-u_1^2)^{\text{\tiny$\frac{m}{2\lambda_1}$}} f\in \mathcal{E}$, then $(I-u_2^2)^\frac{s}{2}f\in \mathcal{E}$ with estimates as follows:
\begin{enumerate}
  \item if $s>\frac{m}{\lambda_2}$  then
\begin{align*}
 \norm{(I-u_2^2)^\frac{t}{2}f}&\leq \left\{\begin{aligned} &C_\epsilon\norm{g}_{t},&\qquad &\text{if }|t\lambda_2-m|\geq\epsilon/2,\\
&C_{\epsilon}\norm{g}_{\frac{m+\epsilon}{\lambda_1}},&\qquad &\text{if }|t\lambda_2-m|<\epsilon/2,
\end{aligned}
 \right.
 \end{align*}
where $0\leq t\leq s$ and $\epsilon<\min\{\frac{m}{2},\frac{1}{2},\frac{s\lambda_2-m}{2}\}$;

\medskip
  \item if $s\leq \frac{m}{\lambda_2}$ then
  \begin{align*}
   \norm{(I-u_2^2)^\frac{t}{2}f}\leq C(m-t\lambda_2)^{-1}\norm{g}_t
  \end{align*}
  for any $0\leq t<s$.

\end{enumerate}
\end{proposition}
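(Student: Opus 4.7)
The plan is to mirror the scheme of Corollary~\ref{cor:1} but applied to the subgroup $G'=\exp(\RR X+\RR u_2)\subseteq S$, which is structurally $\RR\ltimes\RR$ with scaling parameter $\lambda_2$ in place of the $\lambda_1$ attached to $G=\exp(\RR X+\RR u_1)$. First I decompose $(\beta,\mathcal{E})$ as a direct integral of irreducible unitary $S$-representations; by Howe--Moore together with the hypothesis that $\beta$ admits no non-trivial $u_1$- or $u_2$-fixed vectors, Lemma~\ref{le:15} shows that $\mu$-almost every component has the form $\beta^\delta_{s_0}$ with $s_0\neq 0$. In each such component the subalgebra $\{X,u_1\}$ realises the irreducible $G$-representation $\beta^\delta$, while $\{X,u_2\}$ realises a $G'$-representation that, after the rescaling $r\mapsto |s_0|^{1/\lambda_2}r$, becomes an irreducible $\beta^{\delta'}$ of $G'$.

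The hypothesis $(I-u_1^2)^{m/(2\lambda_1)}f\in\mathcal{E}$ is used only to extract the distributional obstruction: \eqref{for:52} of Theorem~\ref{th:2} applied in the $G$-direction gives $\mathcal{M}(g_{\delta,s_0},-m)=0$ for $\mu$-almost every component, which is precisely $\mathcal{D}(g)=0$ for every $(X-m)$-invariant distribution $\mathcal{D}$. Viewing $(\beta,\mathcal{E})$ as a $G'$-representation by restriction, this is exactly the hypothesis of \eqref{for:86} of Corollary~\ref{cor:1} (with $\lambda_2$ playing the role of $\lambda_1$ there), and so one obtains a $G'$-solution $\tilde f\in\mathcal{E}$ with $(I-u_2^2)^{s/2}\tilde f\in\mathcal{E}$. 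Lemma~\ref{le:12} then identifies $\tilde f=f$, establishing $(I-u_2^2)^{s/2}f\in\mathcal{E}$ as claimed.

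The Sobolev estimates follow from \eqref{for:78} of Corollary~\ref{cor:1} when $s>m/\lambda_2$ and from \eqref{for:85} when $s\le m/\lambda_2$, both applied to the $G'$-framework. In Case~1 of the statement, with $|t\lambda_2-m|\ge\epsilon/2$, the denominator $|m-t\lambda_2+i\omega|$ of $\mathcal{P}(z)=\mathcal{M}(g,z)/(m+z)$ is bounded below, so the pointwise inequality $(1+s_0^2 r^{-2\lambda_2})^{t/2}\le C_t(1+|s_0|^t r^{-t\lambda_2})$ together with the Plancherel identity for the Mellin transform yields $|s_0|^{2t}\|r^{-t\lambda_2}f\|^2\le C_\epsilon|s_0|^{2t}\|r^{-t\lambda_2}g\|^2=C_\epsilon\|u_2^tg\|^2\le C_\epsilon\|g\|_t^2$. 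In Case~2, where $|t\lambda_2-m|<\epsilon/2$, one reuses the maximum-modulus step from the proof of \eqref{for:53} of Theorem~\ref{th:2}: because $\mathcal{M}(g,-m)=0$ the function $\mathcal{P}$ extends analytically across $-m$, and Lemma~\ref{for:40} applied on $\partial B_{-m}(\epsilon)$ in the $G$-direction bounds $|\mathcal{P}|$ uniformly there by $C_\epsilon\|g\|_{(m+\epsilon)/\lambda_1}$. Splitting the $\omega$-integration at $|\omega|=\epsilon/2$, inserting the weight $(I-u_2^2)^{t/2}$, and integrating the component-wise estimates against $\mu$ yields the global bound.

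The main obstacle is the factor $|s_0|^{2t}$ produced once $(I-u_2^2)^{t/2}$ is expanded in each irreducible component; in isolation it grows without bound as $|s_0|\to\infty$. It is absorbed by observing that the $S$-Sobolev norm $\|g\|_{(m+\epsilon)/\lambda_1}$, through its $u_2$-directional contributions, already encodes precisely the $|s_0|$-weights needed, so the component-wise bounds assemble correctly against the measure $\mu$ on $s_0$. The uniqueness statement in Lemma~\ref{le:12} is essential at this step, since it guarantees that the Sobolev estimates obtained via the $\{X,u_2\}$-framework apply to the same function $f$ constructed via the $\{X,u_1\}$-framework.
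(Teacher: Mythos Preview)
Your argument has a genuine gap in case~(2), i.e.\ when $s\le m/\lambda_2$. You invoke \eqref{for:86} of Corollary~\ref{cor:1} in the $G'$-direction (with $\lambda_2$ playing the role of $\lambda_1$) to conclude $(I-u_2^2)^{s/2}f\in\mathcal{E}$, but that item carries the hypothesis $s>m/\lambda_2$, which is exactly what fails in case~(2). Your fallback to \eqref{for:85} then only produces $(I-u_2^2)^{t/2}f\in\mathcal{E}$ for $0\le t<s$, not for $t=s$, so the headline conclusion $(I-u_2^2)^{s/2}f\in\mathcal{E}$ is left unproved in that regime. A tell-tale sign: nowhere in your argument do you use the hypothesis $\lambda_1\ge\lambda_2$.

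The paper closes this gap by a short interpolation that exploits $\lambda_1\ge\lambda_2$ directly inside each $S$-irreducible $\beta^\delta_{s_0}$. The $G_1$-analysis via \eqref{for:53} of Theorem~\ref{th:2} delivers not only the vanishing $\mathcal{D}_{\delta,m}(g)=0$ that you extract, but also the full $u_1$-regularity $(I-u_1^2)^{s/2}f\in\mathbb{E}^\delta_{s_0}$, i.e.\ $f\cdot r^{-s\lambda_1}\in\mathbb{E}$. Combined with $f\in\mathbb{E}$, inequality \eqref{for:58} gives $f\cdot r^{-b}\in\mathbb{E}$ for every $0\le b\le s\lambda_1$; taking $b=s\lambda_2\le s\lambda_1$ yields $f\cdot r^{-s\lambda_2}\in\mathbb{E}$, which is precisely $u_2^s f\in\mathbb{E}^\delta_{s_0}$. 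This establishes $(I-u_2^2)^{s/2}f\in\mathcal{E}$ uniformly in \emph{both} cases, after which the quantitative estimates follow from Corollary~\ref{cor:1} applied to the $G_2$-restriction, exactly as you outline. Your route via \eqref{for:86} is acceptable in case~(1) but cannot replace this interpolation step in case~(2).
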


\begin{proof} By arguments in Section \ref{sec:8}, it suffices to prove in irreducible representations of $S$ without non-trivial $u_1$ or $u_2$-fixed vectors. By Lemma \ref{le:15}, we consider
$(\beta^\delta_{s_0},\,\mathbb{E}_{s_0}^\delta)$, $s_0\in\RR\backslash 0$.

Let $G_1$ be the connected subgroup with Lie algebra generated by $\{X,\,u_1\}$.  From Lemma \ref{le:15} we see that the restricted representation of $\beta^\delta_{s_0}$ on $G_1$ is irreducible and is exactly $\beta^\delta$. Then it follows from \eqref{for:52} of Theorem \ref{th:2} that $\mathcal{D}_{\delta,m}(g)=0$; moreover, \eqref{for:53} of Theorem \ref{th:2} shows that
$f\cdot r^{-s\lambda_1}\in \mathbb{E}_{s_0}^\delta$. From \eqref{for:58} we see that $f\cdot r^{-s\lambda_2}\in \mathbb{E}_{s_0}^\delta$, which implies that $u_2^sf\in \mathbb{E}_{s_0}^\delta$.

Let $G_2$ be the connected subgroup with Lie algebra generated by $\{X,\,u_2\}$. It is clear that the restricted representation has no nontrivial $u_2$-fixed vectors. Then the estimates of $u_2^sf$ follow from Corollary \ref{cor:1}.

\end{proof}

\section{Proof of Theorem \ref{th:6}}
We recall notations at the beginning of Section \ref{sec:4}. For any $\phi\in \Phi$, fix a basis $\{Y_{(\phi,1)},\cdots,Y_{(\phi,\dim(\mathfrak{g}^\phi))}\}$ of $\mathfrak{g}^\phi$. By the
decomposition \eqref{for:30} we see that $\{Y_{(\phi,j)}\}$, $\phi\in \Phi$, $1\leq j\leq \dim(\mathfrak{g}^\phi)$ is a basis of $\mathfrak{G}$. We assume that the set
$\{Y_{(\phi,j)}\}$ is inside $\mathfrak{G}^1$. Let $x_0=\min\{\phi:\,\phi>0,\,\phi\in \Phi\}$.

By Lemma \ref{le:12}, the equation
\begin{align*}
(X+m)f=g
\end{align*}
has a unique solution $f\in \mathcal{H}$; moreover,
\begin{align}\label{for:75}
    \norm{v^k f}\leq m^{-1}\norm{v^k g},\qquad 0\leq k\leq s
  \end{align}
 if $v=Y_{(\phi,j)}$, where $\phi(X)=0$, $1\leq j\leq \dim(\mathfrak{g}^\phi)$.

 For any $Y_{(\phi,j)}$ with $\phi\neq 0$, $1\leq j\leq \dim(\mathfrak{g}^\phi)$, we consider the connected subgroup with Lie algebra generated by $\{Y_{(\phi,j)},\,X\}$, which we denote by $G_{\phi,j}$. It is clear that $G_{\phi,j}$ is isomorphic to $\RR\ltimes\RR$.  Then $(\pi,\mathcal{H})$ is also a unitary representation of $G_{\phi,j}$. By Howe-Moore, there is no non-trivial $Y_{(\phi,j)}$-fixed vectors. Hence we can apply previous results to the restricted representation of $\pi$ on $G_{\phi,j}$.

 If $\phi<0$, we consider the restricted representation on $G_{\phi,j}$. It follows from \eqref{for:95} of Corollary \ref{cor:1} that
\begin{align}\label{for:12}
  \norm{(I-Y_{(\phi,j)}^2)^\frac{t}{2}f}\leq Cm^{-1}\norm{g}_t,\qquad 0\leq t\leq s
\end{align}
if $\phi<0$, $1\leq j\leq \dim(\mathfrak{g}^\phi)$.

\textbf{Proof of \eqref{for:88}}:  It follows from \eqref{for:75}.

\smallskip

\textbf{Proof of \eqref{for:81}}: By assumption we note that $s\leq\frac{m}{\phi}$, where $\phi\in\Phi$ with $\phi>0$.  By applying Corollary \ref{cor:1} to the restricted representation on $G_{\phi,j}$ we have
\begin{align}\label{for:80}
  \norm{(I-Y_{(\phi,j)}^2)^\frac{t}{2}f}\leq C(m-t\phi(X))^{-1}\norm{g}_t\leq C(m-ty_0)^{-1}\norm{g}_t,
\end{align}
if $0\leq t<s$.

\eqref{for:75}, \eqref{for:12} and \eqref{for:80} together with Theorem \ref{th:4} show that $f\in \mathcal{H}^{s-2}$ with estimates
 \begin{align*}
   \norm{f}_t\leq C_{t,m}\norm{g}_t
 \end{align*}
  if $0\leq t\leq s-2$.  Hence we prove \eqref{for:81}.

  \medskip
\textbf{Proof of \eqref{for:4}}:  Now we consider the case of $s>\frac{m}{y_0}$. Ar first, we show that $(I-Y_{(y_0,1)}^2)^\frac{s}{2}f\in \mathcal{H}$. We suppose $Y_{y_0,1}$ and $\mathfrak{v}$ are linearly independent. Otherwise, the claim is obvious.  We consider the connected subgroup $S$ with Lie algebra $\{X,Y_{y_0,1},\mathfrak{v}\}$. Note that
$[Y_{\omega,1},\mathfrak{v}]=0$. In fact, for any $u\in \mathfrak{g}^\phi$ with $\phi>0$, we have $[Y_{(y_0,1)},u]=0$. Otherwise, $\phi+y_0\in \Phi$ with
$y_0+\phi>y_0$, which contradicts the assumption. Hence we see that $S$ is isomorphic to $\RR\ltimes\RR^2$ as in Section \ref{sec:10}. By Howe-Moore, we can apply Proposition \ref{po:1} to the
the restricted representation on $S$. Then we have $(I-Y_{(y_0,1)}^2)^\frac{s}{2}f\in \mathcal{H}$.

Next, we consider the restricted representation on $G_{y_0,1}$. Corollary \ref{cor:1} shows that
\begin{align}\label{for:82}
 \norm{(I-Y_{(y_0,1)}^2)^\frac{t}{2}f}&\leq C_{t,m}\norm{g}_{t+\frac{\epsilon}{x_0}}\leq C_{t,m}\norm{g}_{t+\frac{1}{2}},\qquad\text{and}\notag\\
 \norm{(I-Y_{(y_0,1)}^2)^\frac{\ell}{2} f}&\leq C_{\ell,m}\norm{g}_{s},
\end{align}
for any $0\leq t\leq s-\frac{1}{2}$ and $0\leq \ell\leq s$ if we choose $\epsilon$ sufficiently small.

 For any $Y_{(\phi,j)}\neq Y_{(y_0,1)}$ with $\phi>0$ and $1\leq j\leq\dim(\mathfrak{g}^\phi)$
We consider the connected subgroup with Lie algebra generated by $\{Y_{(y_0,1)},\,Y_{(\phi,j)},\,X\}$, which we denote by
$S_{\phi,j}$. The above discussion shows that $S_{\phi,j}$ is is isomorphic to $\RR\ltimes\RR^2$ as in Section \ref{sec:10}. Thanks to Howe-Moore, we can apply Proposition \ref{po:1} to the
the restricted representation on $S_{\phi,j}$. Proposition \ref{po:1} and \eqref{for:82} show that
\begin{align}\label{for:83}
 \norm{(I-Y_{(\phi,j)}^2)^\frac{t}{2}f}&\leq C_{t,m}\norm{g}_{t+\frac{\epsilon}{x_0}}\leq C_{t,m}\norm{g}_{t+\frac{1}{2}},\qquad\text{and}\notag\\
 \norm{(I-Y_{(\phi,j)}^2)^\frac{\ell}{2} f}&\leq C_{s,m}\norm{g}_{s}
\end{align}
for any $0\leq t\leq s-\frac{1}{2}$ and $0\leq \ell\leq s$ if we choose $\epsilon$ sufficiently small.

Then it follows from \eqref{for:75}, \eqref{for:12}, \eqref{for:82}, \eqref{for:83} and Theorem \ref{th:4} that $f\in \mathcal{H}^{s}$ with estimates
 \begin{align*}
   \norm{f}_t\leq C_{t,m}\norm{g}_{t+2},\quad\text{and}\quad \norm{f}_s\leq C_{s,m}\norm{g}_{s}
 \end{align*}
  if $0\leq t\leq s-2$. Hence we get \eqref{for:4}.

   \medskip
\textbf{Proof of \eqref{for:84}}: The result follows immediately from \eqref{for:4}.

 \medskip
\textbf{Proof of \eqref{for:87}}: We note that the $(X-m)$-invariant distributions of the restricted representations are also the invariant distributions of $\pi$. Then we can center on $(X-m)$-invariant distributions of subgroups. We consider the restricted representation on $G_{\omega,1}$.  Then it follows from \eqref{for:86} of Corollary \ref{cor:1} that the twisted cohomological equation $(X-m)f=g$ has a solution $f\in \mathcal{H}$ with $(I-Y_{(y_0,1)}^2)^\frac{s}{2}f\in \mathcal{H}$. Then \eqref{for:4} implies that $f\in \mathcal{H}^{s}$.

\section{Proof of Theorem \ref{th:5}}
For any $\RR$-semisimple $Y\in \mathfrak{G}$, we have the decomposition of $\mathfrak{G}$ for $\text{ad}_Y$:
\begin{align*}
\mathfrak{G}=\sum_{\mu\in\Phi(Y)}\mathfrak{g}_Y^\mu
\end{align*}
where $\Phi(Y)$ is the set of eigenvalues and $\mathfrak{g}_Y^\mu$ is the eigenspace for eigenvalue $\mu$.

Let $\mathbf{g}$ be the subalgebra generated by all $\mathfrak{g}_X^\mu$, $\mu\neq 0$. Then $\mathfrak{g}$ is an ideal in $\mathfrak{G}$. Let
$\GG'=\GG'(X)$ be the connected subgroup with Lie algebra $\mathbf{g}$. If $Y$ is sufficiently close to $X$, then $\Phi(Y)\subset \bigcup_{\mu\in \Phi(X)}(\mu-\epsilon,\mu+\epsilon)$, where
$\epsilon$ sufficiently close to $0$,  Note that $\mathbf{g}$ is an invariant subspace for $\text{ad}_Y$. Then we have the direct sum decomposition:
\begin{align*}
\mathfrak{G}=\sum_{\mu\in\Phi_1(Y)}\mathfrak{l}_\mu+\sum_{\mu\in\Phi_2(Y)}\mathfrak{l}_\mu
\end{align*}
such that
\begin{align*}
\mathbf{g}=\sum_{\mu\in\Phi_1(Y)}\mathfrak{l}_\mu.
\end{align*}
We note that $\Phi_1(Y)$ is the set of eigenvalues of $\text{ad}_Y$  restricted on $\mathbf{g}$ and eigenvalues in $\Phi_2(Y)$ are sufficiently close to $0$ if $Y$ is sufficiently close to $X$.

Set $\Phi'=\{\phi\in \Phi(X): \phi>0\}$, $\Phi''=\{\phi\in \Phi(X): \phi=0\}$ and $\Phi'''=\{\phi\in \Phi(X): \phi<0\}$. We choose $\delta$ sufficiently small such that for any $\RR$-semisimple $Y\in \mathfrak{G}$ with  $|Y-X|\leq\delta$ and $m\in\RR$ with $|m-m_0|\leq\delta$, the followings hold:
\begin{enumerate}

  \item\label{for:101} $\min\{\mu:\mu>0,\mu\in \Phi_1(Y)\}>\frac{1}{2}\min\{\phi:\phi\in \Phi'\}$;

\smallskip
  \item\label{for:13} $\max\{|\mu|:\mu\in \Phi_2(Y)\}<\frac{1}{2}\min\{|\phi|:0\neq\phi\in \Phi(X)\}$;

\smallskip
\item\label{for:98} $\frac{1}{2}m_0\leq m\leq 2m_0$;

 \smallskip
\item \label{for:99} $s\max\{|\mu|:\mu\in \Phi_2(Y)\}<\frac{m_0}{8}$;

\smallskip
\item \label{for:2} $\min\{\frac{m}{\mu}:\mu>0,\mu\in \Phi_1(Y)\}\leq \frac{3}{2}\min\{\frac{m_0}{\phi}:\phi\in \Phi'\}$.

\end{enumerate}
Let $s_0=\frac{3}{2}\min\{\frac{m_0}{\phi}:\phi\in \Phi'\}$.

\medskip
\textbf{Proof of \eqref{for:97}}: It is a direct consequence of \eqref{for:88} of  Theorem \ref{th:6} and assumption \eqref{for:98}.

\medskip
\textbf{Proof of \eqref{for:104}}:  Assumptions \eqref{for:13} and \eqref{for:2} and the choice of $s_0$ show that $s>\min\{\frac{m}{\mu}:\mu>0,\mu\in \Phi(Y)\}$. Then
\eqref{for:104} follows from \eqref{for:87} of Theorem \ref{th:6}.

\medskip
\textbf{Proof of \eqref{for:103}}: Assumptions \eqref{for:13} and \eqref{for:2} and the choice of $s_0$ show that $s>\min\{\frac{m}{\mu}:\mu>0,\mu\in \Phi(Y)\}$. Then by
 \eqref{for:84} of Theorem \ref{th:6}, we see that $f\in \mathcal{H}^s$. It remains to show how to obtain Sobolev estimates of $f$.

We can choose a basis $\{Z_{(\mu,j)}: \mu\in \Phi(Y), 1\leq j\leq \dim(\mathfrak{g}^\mu)\}$ of $\mathfrak{G}$ such that the vectors in $\{Z_{(\mu,j)}\}$ are sufficiently close to the vectors in $\{Y_{(\mu,j)}\}$, see the beginning of the proof of Theorem \ref{th:6}. In fact, we can follow the proof line of Theorem \ref{th:6}.

From \eqref{for:75} and assumption \eqref{for:98} we have
\begin{align}\label{for:100}
    \norm{v^k f}\leq m^{-1}\norm{v^k g}\leq 2m_0^{-1},\qquad 0\leq k\leq s
  \end{align}
 if $v=Z_{(\mu,j)}$, where $\mu=0$, $1\leq j\leq \dim(\mathfrak{g}^\mu)$.

If $\mu\in \Phi(Y)$ with $\mu<0$, from \eqref{for:12} and assumption \eqref{for:98} we have
\begin{align}\label{for:14}
  \norm{(I-Z_{(\mu,j)}^2)^\frac{t}{2}f}\leq Cm_0^{-1}\norm{g}_t,\qquad 0\leq t\leq s
\end{align}
if $\mu<0$, $1\leq j\leq \dim(\mathfrak{g}^\phi)$.

Next, we consider $0<\mu\in \Phi_2(Y)$. Assumption \eqref{for:99} implies that $s<\frac{m}{2\mu}$. We consider the subgroup $G'_{\mu,j}$ with Lie algebra generated by $\{Y,Z_{\mu,j}\}$, $1\leq j\leq \dim(\mathfrak{g}^\mu)$. Then by Corollary \ref{cor:1} and assumption \eqref{for:98} we have
\begin{align}\label{for:102}
  \norm{(I-Z_{(\mu,j)}^2)^\frac{t}{2}f}\leq C(m-t\mu)^{-1}\norm{g}_t\leq Cm_0^{-1}\norm{g}_t,
\end{align}
if $0\leq t\leq s$.

Finally, we consider $0<\mu\in \Phi_1(Y)$.  Let $z_0=\max\{\mu:\mu>0,\mu\in \Phi_1(Y)\}$ and $w_0=\min\{\mu:\mu>0,\mu\in \Phi_1(Y)\}$.  We see that \eqref{for:82} holds if we substitute $Y$ by $Z$, $y_0$ by $z_0$ and $x_0$ by $w_0$ thanks to assumption \eqref{for:101}.

Assumption \eqref{for:101} also implies that \eqref{for:83} holds for all $\phi\in \Phi_1(Y)$, $\mu>0$ if we substitute $Y$ by $Z$, $\phi$ by $\mu$ in $\Phi_1(Y)$ and $x_0$ by $w_0$.
Hence we can also get \eqref{for:103} by the Sobolev estimates we obtained so far and Theorem \ref{th:4}.

\section{Proof of Theorem \ref{th:7}}
At first, we consider the case of $\mathfrak{u}$ nilpotent. Since $[X,\mathfrak{u}]=0$, it is clear that $\mathfrak{g}^\phi$ is invariant under $\text{ad}_{\mathfrak{u}}$ for any $\phi\in \Phi$. Since $\text{ad}_{\mathfrak{u}}$ is nilpotent on $\mathfrak{g}^\phi$, there exists $0\neq\mathfrak{v}_\phi$ such that $\text{ad}_{\mathfrak{u}}(\mathfrak{v}_\phi)=0$. Especially, we consider $\phi=y_0$. Then we consider the connected subgroup $S$ with Lie algebra  generated by $\{X,\mathfrak{v}_{y_0}, \mathfrak{u}\}$. It is clear that $S$ is isomorphic to $(\RR\ltimes \RR)\times\RR$. We consider the restricted representation $\pi$ on $S$. By Howe-Moore, we can apply
Proposition \ref{po:3}. Hence we see that the twisted cocycle equation has a common solution $h\in \mathcal{H}$ with $(I-\mathfrak{v}^2_{y_0})^{\frac{s}{2}}h\in \mathcal{H}$. Then \eqref{for:4} of Theorem \ref{th:6} shows that
$h\in \mathcal{H}^s$ and the estimates follow immediately.

If $\mathfrak{u}$ is in a $\RR$-split Cartan algebra and $[X,\mathfrak{u}]=0$, then there exists $0\neq\mathfrak{v}\in \mathfrak{g}^\phi$ such that $[\mathfrak{u},\mathfrak{v}]=\lambda\mathfrak{v}$. Here $\phi(X)=y_0$.
If $\lambda=0$, we go back to the nilpotent case, where the connected subgroup with Lie algebra  $\{X,\mathfrak{v}, \mathfrak{u}\}$ is isomorphic to $(\RR\ltimes \RR)\times\RR$.

If $\lambda\neq0$, we can rewrite the twisted
cocycle equation as
\begin{align*}
 (X+m-\lambda^{-1}\phi(X)(\mathfrak{u}+m_1))g_2=(X+m)(g_2-\lambda^{-1}\phi(X)g_1).
\end{align*}
It is clear that $X-\lambda^{-1}\phi(X)\mathfrak{u}$ is $\RR$-semisimple and $[X-\lambda^{-1}\phi(X)\mathfrak{u},\mathfrak{v}]=0$. Then also we go back to the nilpotent case, where the connected subgroup with Lie algebra  $\{X,\mathfrak{v}, X-\lambda^{-1}\phi(X)\mathfrak{u}\}$ is isomorphic to $(\RR\ltimes \RR)\times\RR$. Then we have a common solution
$h\in \mathcal{H}^s$ which solves
\begin{align*}
 (X+m)h&=g_2\\
 (X+m-\lambda^{-1}\phi(X)(\mathfrak{u}+m_1))h&=g_2-\lambda^{-1}\phi(X)g_1
\end{align*}
simultaneously, which is exactly
\begin{align*}
 &(X+m)h=g_2, \qquad (\mathfrak{u}+m_1)h=g_1.
\end{align*}
The estimates of $h$ follows from Theorem \ref{th:6}. Hence we finish the proof.

\end{document}